\newtheorem{theorem}{Theorem}[section]
\newtheorem{corollary}[theorem]{Corollary}
\newtheorem{conjecture}[theorem]{Conjecture}
\newtheorem{lemma}[theorem]{Lemma}
\newtheorem*{RHF}{Riemann-Hurwitz Formula}
\newtheorem*{ELT}{Equivariant Loop Theorem}
\theoremstyle{definition}
\newtheorem{example}[theorem]{Example}
\newtheorem{definition}[theorem]{Definition}
\theoremstyle{remark}
\newtheorem{remark}[theorem]{Remark}
\begin{document}

\title[The most symmetric surfaces in the 3-torus]
{The most symmetric surfaces in the 3-torus}

\author{Sheng Bai}
\address{School of Mathematical Sciences, Peking University, Beijing 100871, CHINA}
\email{Bais@math.pku.edu.cn}

\author{Vanessa Robins}
\address{Department of Applied Mathematics,
Research School of Physics and Engineering, 
The Australian National University, Canberra, Australia}
\email{vanessa.robins@anu.edu.au}

\author{Chao Wang}
\address{School of Mathematical Sciences, University of
Science and Technology of China, Hefei 230026, CHINA}
\email{chao\_{}wang\_{}1987@126.com}

\author{Shicheng Wang}
\address{School of Mathematical Sciences, Peking University, Beijing 100871, CHINA}
\email{wangsc@math.pku.edu.cn}

%\author{}
%\address{}
%\email{}

\subjclass[2010]{57M60, 57N10,   57S25, 53A10, 20F65, 05C10; }

\keywords{maximum surface symmetry in 3-torus, minimal surface}

\thanks{The  second author is supported by grant No.11501534 of NSFC and the last author is supported by grant No.11371034 of NSFC}

\begin{abstract}
Suppose an orientation preserving action of a finite group  $G$ on the closed surface $\Sigma_g$  of genus $g>1$ 
extends over the 3-torus $T^3$ for some embedding $\Sigma_g\subset T^3$.
Then $|G|\le 12(g-1)$, and this upper bound $12(g-1)$ can be achieved for $g=n^2+1, 3n^2+1, 2n^3+1, 4n^3+1, 8n^3+1,  n\in \mathbb{Z}_+$. Those surfaces in $T^3$ realizing the maximum symmetries can be either unknotted or  knotted.
Similar problems in non-orientable category  is also discussed.

Connection with minimal surfaces in $T^3$ is addressed and  when the maximum symmetric surfaces above can be  realized  by minimal surfaces
is identified. \end{abstract}
%\date{}
\maketitle
\vspace{-.5cm}
\tableofcontents

\section{Introduction}

Let $\Sigma_g$ be the closed orientable  surface of genus $g>1$, $\Pi_g$ be the closed non-orientable  surface of genus $g>2$, and $T^3$ be the  three dimensional torus (3-torus for short). We consider the following question in the smooth category:

Suppose the action of a finite group  $G$ on a closed surface $S$  can extend over $T^3$.  Then what is the maximum order of the group  and what does the maximum action look like?

A similar problem has been addressed for surfaces embedded in the 3-sphere, $S^3$, which is the simplest compact 3-manifold in the sense that it is a one point compactification of our three space and the universal spherical 3-manifold covering all spherical 3-manifolds.  See \cite{WWZZ1} and \cite{WWZZ2} for surfaces in the orientable category. %, and \cite{WWZ} in the general case.  
Note that only orientable surfaces $\Sigma_g$ can be embedded in $S^3$.
%Note also only $\Sigma_g$ can stay in $S^3$. 
%[[ VR: I think this is what you meant here - please check.]] 
$T^3$ is another natural and significant 3-manifold for this question. It is the universal compact Euclidean 3-manifold in the sense that it covers all compact Euclidean 3-manifolds; moreover $T^3$ is covered by our 3-space and the preimage of a closed surface $S\subset T^3$ under such a covering can be a triply periodic surface, which is an interesting object in the natural sciences and engineering~\cite{Hyde}. 

\begin{definition}\label{Def of extendable action} Let $G$ be a finite group.
A $G$-action on a closed surface $S$ is extendable over $T^3$ with respect to an embedding $e: S\hookrightarrow T^3$ if $G$ can act on $T^3$ such that $h\circ e=e\circ h$ for any $h \in G$.
\end{definition}

%[[VR: I'm a little bit confused by the expression $h\circ e=e\circ h$  since on the left hand side $h : T^3 \to T^3$ but on the right hand side, $h : S \to S$.  Is $h$ notation for an abstract element of the group, realised by two different geometric groups (one on $S$ and the other on $T^3$)? Would it be more correct to say that ``there is a mapping $h': T^3 \to T^3$ such that $h' \circ e = e \circ h$ for any $h \in G$. '' ?    ]]

For an embedding of an orientable surface in $T^3$ we can define whether it is unknotted in the same way as the usual definition for knotting in $S^3$. 

\begin{definition}\label{Def of unknotted and knotted}
An embedding $e: \Sigma_g\hookrightarrow T^3$ is unknotted if $e(\Sigma_g)$ splits $T^3$ into two handlebodies. Otherwise it is knotted.
\end{definition}

%[[VR: Is there a reference for this definition? Perhaps just mention that it is analogous to the case of a knotted surface in $S^3$?  ]]

We assume all orientable manifolds in this note are already oriented.
According to whether the action preserves the orientation of the surface and $T^3$, we can define four classes of maximum orders.

\begin{definition}\label{Def of maximum order}
Let $S$ be either $\Sigma_g$ or $\Pi_g$. Define $E(S)$, $E^+(S)$, $E_+(\Sigma_g)$ and $E^+_+(\Sigma_g)$ as below:

$E(S)$: the maximum order of all extendable $G$-actions on $S$.

$E^+(S)$: the maximum order of all extendable $G$-actions on $S$ which preserve the orientation of $T^3$.

$E_+(\Sigma_g)$: the maximum order of all extendable $G$-actions on $\Sigma_g$ which preserve the orientation of $\Sigma_g$.

$E^+_+(\Sigma_g)$: the maximum order of extendable $G$-actions on $\Sigma_g$, which preserve the orientation of both $T^3$ and $\Sigma_g$.
\end{definition}

 We will prove the following Theorem \ref{upper bound} and Theorem \ref{sharp upper bound}. Theorem \ref{upper bound}  gives the upper bounds of those invariants defined in Definition \ref{Def of maximum order}, and Theorem \ref{sharp upper bound}
 provides infinitely many $g$ to realize each upper bound in  Theorem \ref{upper bound}.
 
\begin{theorem}\label{upper bound} Suppose the genus $g>1$ for $\Sigma_g$ and $g > 2$ for $\Pi_g$.

(1) $E^+_+(\Sigma_g)\leq 12(g-1)$.

(2) $E_+(\Sigma_g)\leq 24(g-1)$, $E^+(\Sigma_g)\leq 24(g-1)$, $E(\Sigma_g)\leq 48(g-1)$.

(3) $E^+(\Pi_g)\leq 12(g-2)$, $E(\Pi_g)\leq 24(g-2)$.
\end{theorem}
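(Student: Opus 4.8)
The plan is to prove the single master inequality (1), $E^+_+(\Sigma_g)\le 12(g-1)$, and then to deduce (2) and (3) from it by elementary index and covering arguments. First I would pass to a rigid model by invoking the fact (to be cited) that a finite group action on $T^3$ is conjugate to an affine action, so I may assume $G$ acts by isometries of a flat metric. Taking linear parts then yields a homomorphism of $G$ onto a \emph{point group} $P$; since the orientation of $T^3$ is preserved, $P\le SO(3)$, and up to conjugacy $P\le GL(3,\mathbb{Z})$. Hence $P$ is a three-dimensional crystallographic rotation group, so $P\in\{C_n,D_n\ (n=1,2,3,4,6),\,T,\,O\}$. In particular every rotation in $P$ has order in $\{2,3,4,6\}$, and $P$ never contains a $4$-fold and a $6$-fold rotation simultaneously.

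Next I would analyze $\Sigma_g/G$. Because the action preserves the orientation of $\Sigma_g$, a nontrivial point-stabilizer is cyclic, generated by an isometry whose linear part is a rotation about the surface normal; thus every cone point of $\Sigma_g/G$ has order in $\{2,3,4,6\}$. Writing the Riemann--Hurwitz Formula as $2g-2=|G|\cdot\bigl(-\chi^{orb}(\Sigma_g/G)\bigr)$ with $-\chi^{orb}>0$, the desired bound $|G|\le 12(g-1)$ is equivalent to $-\chi^{orb}(\Sigma_g/G)\ge\tfrac16$. A short enumeration shows that among all closed orientable $2$-orbifolds with cone orders in $\{2,3,4,6\}$ the only signatures with $0<-\chi^{orb}<\tfrac16$ are $S^2(2,4,6)$ and $S^2(3,3,4)$, both having $-\chi^{orb}=\tfrac1{12}$. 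So everything reduces to excluding these two.

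This exclusion is the heart of the matter, and $S^2(3,3,4)$ is the genuinely delicate case; I expect pinning the constant to $12$ rather than the $24$ that the crystallographic restriction alone permits to be the main obstacle. The signature $S^2(2,4,6)$ is ruled out at once, since it forces $P$ to contain both a $4$-fold and a $6$-fold rotation, which no crystallographic point group does. For $S^2(3,3,4)$ I would argue by abelianization: stabilizers of orders $3$ and $4$ force $P$ to contain a $3$-fold and a $4$-fold rotation, and the only crystallographic rotation group with both is $O\cong S_4$, so $G$ surjects onto $S_4$ and hence $G^{ab}$ surjects onto $S_4^{ab}=\mathbb{Z}/2$. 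But $G$ is a quotient of $\pi_1^{orb}(S^2(3,3,4))=\Delta(3,3,4)$, whose abelianization is $\mathbb{Z}/3$; thus $G^{ab}$ is a quotient of $\mathbb{Z}/3$ and admits no surjection onto $\mathbb{Z}/2$, a contradiction. Therefore $-\chi^{orb}(\Sigma_g/G)\ge\tfrac16$ and $E^+_+(\Sigma_g)\le 12(g-1)$.

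Finally I would deduce the remaining bounds. For (2), the subgroup of $G$ preserving both orientations has index at most $2$ when $G$ already preserves one of them and index at most $4$ in general; combining with (1) gives $E_+(\Sigma_g),E^+(\Sigma_g)\le 24(g-1)$ and $E(\Sigma_g)\le 48(g-1)$. For (3), I would replace $\Pi_g$ by the boundary of a $G$-invariant twisted $I$-bundle neighborhood, namely its orientation double cover $\Sigma_{g-1}\subset T^3$, which carries an induced $G$-action extending over $T^3$; since each element preserves the co-orientation pointing out of the neighborhood, the induced action preserves the orientation of $\Sigma_{g-1}$ exactly when it preserves that of $T^3$. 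Applying (1) and (2) to $\Sigma_{g-1}$ then yields $E^+(\Pi_g)\le 12(g-2)$ and $E(\Pi_g)\le 24(g-2)$.
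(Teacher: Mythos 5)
Your proposal is correct, but it reaches the key inequality (1) by a genuinely different route from the paper. The paper cuts $T^3$ along $e(\Sigma_g)$ and applies the Meeks--Yau Equivariant Loop Theorem to produce a $G$-equivariant compression disk whose boundary descends to an essential simple closed curve in $\Sigma_g/G$; this rules out the quotient being a sphere with at most three cone points, after which the crude estimate $1-\tfrac{1}{q_i}\ge\tfrac12$ already forces $-\chi^{orb}\ge\tfrac16$, with no control on the cone orders needed. You instead invoke geometrization of finite smooth actions on $T^3$ (Meeks--Scott) to make the action affine, use the crystallographic restriction to confine cone orders to $\{2,3,4,6\}$, and then kill the two exceptional signatures: $S^2(2,4,6)$ by the absence of a point group containing both a $4$-fold and a $6$-fold rotation, and $S^2(3,3,4)$ by the clean observation that $G$ would have to surject onto $O\cong S_4$ while being a quotient of $\Delta(3,3,4)$, whose abelianization $\mathbb{Z}/3$ admits no surjection onto $\mathbb{Z}/2$. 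I checked your orbifold enumeration (the only closed orientable $2$-orbifolds with cone orders in $\{2,3,4,6\}$ and $0<-\chi^{orb}<\tfrac16$ are indeed those two, both with $-\chi^{orb}=\tfrac1{12}$) and the injectivity of point stabilizers into the point group; both are sound. The trade-off: the paper's argument is portable --- it yields the more general Theorem \ref{Thm of 12(g-1)}, valid for any $3$-manifold in which $\Sigma_g$ fails to be $\pi_1$-injective --- whereas yours is tied to the flat geometry of $T^3$ but delivers finer information (the admissible quotient signatures, and in particular that the extremal case must be $S^2(2,2,2,3)$, consistent with the paper's equality analysis). Both rest on comparably heavy minimal-surface machinery, and your deductions of (2) and (3) coincide with the paper's; just make sure the Meeks--Scott rigidity theorem is actually cited, since the whole crystallographic restriction hangs on it.
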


If an extendable $G$-action on $S$ realizes one of the above upper bounds, then the corresponding surface $e(S)$ can be thought as a most symmetric surface in $T^3$.

\begin{theorem}\label{sharp upper bound}   Suppose  $n$ is any  positive integer.

(1)  The upper bound of $E^+_+(\Sigma_g)$ in Theorem \ref{upper bound} can be achieved by 
an unknotted embedding for $g=2n^3+1, 4n^3+1, 8n^3+1$; and 
by a knotted embedding for $g=2n^3+1, 4n^3+1, 8n^3+1$, where $6$ does not divide $n$;  and $g=n^2+1, 3n^2+1$.

(2)  The upper bound of $E_+(\Sigma_g)$, $E^+(\Sigma_g)$ and $E(\Sigma_g)$ in Theorem \ref{upper bound}  can be achieved for $g=2n^3+1, 4n^3+1, 8n^3+1$ by an unknotted embedding.

(3) The upper bound of $E^+(\Pi_g)$ and $E(\Pi_g)$ in Theorem \ref{upper bound}  can be achieved for $g=2n^3+2, 8n^3+2$, where $n$ is odd.
\end{theorem}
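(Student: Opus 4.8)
The plan is to establish all of Theorem~\ref{sharp upper bound} by \emph{explicit equivariant construction}, producing every surface from a small list of primitive models and then passing to finite self-coverings of the torus. The organising principle is that, by Definition~\ref{Def of extendable action}, an extendable $G$-action is exactly a presentation of $G$ as a quotient $\Gamma/\Lambda$ of a crystallographic space group $\Gamma\subset\mathrm{Isom}(\mathbb R^3)$ by a $\Gamma$-invariant lattice $\Lambda$, acting on $T^3=\mathbb R^3/\Lambda$; hence $|G|=|P|\cdot[L:\Lambda]$, where $P\subset O(3)$ is the point group and $L$ the full translation lattice. The orientation clauses of Definition~\ref{Def of maximum order} translate into the conditions $P\subset SO(3)$ (for preserving the orientation of $T^3$) and that the relevant elements preserve the transverse orientation of the surface (for preserving the orientation of $S$). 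In each case the Riemann--Hurwitz formula $\chi(S)=|G|\,\chi^{\mathrm{orb}}(S/G)$ fixes the genus, and for the sharp $E^+_+$ families the quotient $2$-orbifold always has $\chi^{\mathrm{orb}}=-1/6$, realised for instance by a sphere with three cone points of type $(3,4,4)$ cut out where the face- and diagonal-axes of the cube pierce the surface.

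\emph{Cubic families, unknotted (parts (1) and (2)).} I would take as base models three cubically symmetric surfaces of genera $3,5,9$ whose symmetry groups preserving both orientations have orders $24,48,96=12(g-1)$, the genus-$3$ model being the Schwarz $P$-surface in $\mathbb R^3/\mathbb Z^3$ (chiral octahedral group $O\cong S_4$, splitting $T^3$ into two genus-$3$ handlebodies) and the genus-$9$ model the Neovius surface. Adjoining the transverse-orientation-reversing and $T^3$-orientation-reversing symmetries enlarges each group by the two index-$2$ steps needed to realise $E^+$, $E_+$ and $E$ in part~(2). The full families then arise by pulling each base model back along the isotropic degree-$n^3$ self-covering $\mathbb R^3/n\Lambda\to\mathbb R^3/\Lambda$: the deck group contributes $n^3$ transverse-orientation-preserving translations, so $|G|$ is multiplied by $n^3$, while Euler-characteristic multiplicativity gives a connected cover of genus $n^3(g_0-1)+1$, yielding $g=2n^3+1,\,4n^3+1,\,8n^3+1$ with $|G|=12(g-1)$ exactly. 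The one delicate point is connectivity of the pulled-back surface, which I would secure by checking that $H_1(\Sigma_{g_0})\to H_1(T^3)=\mathbb Z^3$ surjects onto the deck quotient $(\mathbb Z/n)^3$; unknottedness is then inherited because the base handlebody splitting lifts, and the $P$ (or $D$, $G$) and Neovius surfaces supply the minimal realisations identified in the abstract for $g=3$ and $g=9$.

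\emph{Knotted cubic and quadratic families (part (1)).} For the knotted cubic realisations I would replace $P$ by a cubically symmetric genus-$3$ surface obtained as the boundary of a regular neighbourhood of a symmetric spatial net, whose complementary pieces contain incompressible tori and so fail to be handlebodies; the same isotropic covers then give knotted embeddings of genera $2n^3+1,4n^3+1,8n^3+1$, the restriction $6\nmid n$ being precisely the condition under which the relevant homology map still surjects onto $(\mathbb Z/n)^3$, keeping the preimage connected and of the same knot type. The quadratic families come from a second primitive model: a knotted genus-$2$ surface carrying the chiral tetrahedral group $T\cong A_4$ of order $12=12(2-1)$, which \emph{must} be knotted since a genus-$2$ surface cannot split the Heegaard-genus-$3$ manifold $T^3$ into two handlebodies. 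Pulling back along the two-dimensionally periodic covers of index $n^2$ (square sublattice) and $3n^2$ (centred sublattice) multiplies the order by the degree and produces connected knotted surfaces of genus $n^2+1$ and $3n^2+1$ with $|G|=12n^2$ and $36n^2=12(g-1)$.

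\emph{Non-orientable families (part (3)).} Finally I would obtain $\Pi_g$ by quotienting the orientable $E^+$-configuration of genus $2n^3+1$ (resp.\ $8n^3+1$), whose $T^3$-orientation-preserving group has order $24(g-1)$, by a central involution $\tau\in G$ that reverses the transverse orientation of the surface and acts \emph{freely}: the quotient is a connected non-orientable surface of non-orientable genus $(\tilde g+1)=2n^3+2$ (resp.\ $8n^3+2$) with residual symmetry of order $12(g-2)$, realising $E^+(\Pi_g)$, and adjoining a reflection realises $E(\Pi_g)$. The hypothesis that $n$ is odd is exactly what makes the candidate $\tau$ (a half-period translation composed with an order-two rotation) fixed-point free with connected quotient, and the body-centred family $4n^3+1$ is omitted because no such free central transverse-orientation-reversing involution survives there. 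I expect the genuine obstacle throughout to be the \emph{knotted} constructions---both exhibiting the invariant net-neighbourhood surfaces inside the cubic and prismatic orbifolds and certifying knottedness by locating incompressible surfaces in their complements---together with verifying the arithmetic connectivity conditions $6\nmid n$ and $n$ odd, since, unlike the cubic minimal surfaces, these embeddings have no classical model to quote.
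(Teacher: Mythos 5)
Your overall strategy -- explicit crystallographic constructions, passage to sublattice covers with Riemann--Hurwitz bookkeeping, and quotienting by a free side-swapping involution for the non-orientable case -- is the same as the paper's, but several of your concrete choices fail. First, the Neovius surface cannot serve as the genus-$9$ base model: its full symmetry group is $[Im\bar{3}m]$, of order $96$ modulo $T_1$, but the subgroup preserving the orientation of $T^3$ \emph{and} both sides of the surface is only $[P432]/T_1$ of order $24$, far short of $12(9-1)=96$. The genus-$9$ maximal example is instead the $P$ (or $D$, or gyroid) surface in a torus of four times the volume; this is why the paper uses a single genus-$3$ net per family and varies the invariant sublattice through the primitive, face-centred and body-centred families $T_{n^3},T_{2n^3},T_{4n^3}$ (Examples \ref{Ex of scP}--\ref{Ex of shG}), rather than three base genera and only isotropic degree-$n^3$ covers. (Your genus-$5$ base model is never identified at all.) Second, $A_4$ cannot act faithfully and orientation-preservingly on $\Sigma_2$: the only signature of Euler characteristic $-1/6$ whose periods occur in $A_4$ is $(0;2,2,2,3)$, and in $A_4$ any product of three involutions lies in the Klein four-subgroup, so it can never equal the inverse of an order-$3$ element. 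The order-$12$ group for the quadratic families is the dihedral group $D_6$, realized by stacked hexagonal layers with space group $P622$ (Example \ref{Ex of rhombus}); your observation that any genus-$2$ surface in $T^3$ is automatically knotted (Heegaard genus $3$) is correct and is a nice shortcut, but it rests on a base model that does not exist as described. A smaller slip of the same kind: the quotient orbifold at equality is $S^2(2,2,2,3)$, not $S^2(3,4,4)$; the proof of Theorem \ref{upper bound}(1) explicitly rules out three cone points when the bound is attained.

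The more serious gap is that the knotted cubic families are never constructed. You posit ``a symmetric spatial net whose complementary pieces contain incompressible tori,'' but exhibiting such nets with the full order-$12(g-1)$ symmetry, and then certifying non-handlebody complements by locating incompressible tori equivariantly, is exactly the hard content; the paper instead builds six explicit families from \emph{disconnected} periodic graphs (catenated and interpenetrating copies of \textbf{pcu}, \textbf{dia} and \textbf{srs}) whose quotients in $T^3$ are connected, and certifies knottedness by the much cheaper Lemma \ref{Lem of knotted}: if the preimage of $\Sigma_g$ under $E^3\to T^3$ is disconnected then $\pi_1(\Sigma_g)\to\pi_1(T^3)$ is not surjective, so the surface cannot bound handlebodies on both sides. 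Relatedly, your arithmetic conditions are asserted rather than derived and attached to the wrong mechanism: $6\nmid n$ (really $2\nmid n$ or $3\nmid n$ depending on the family) and ``$n$ odd'' arise from requiring that the quotient of a disconnected $\Gamma$ by the chosen sublattice be connected, respectively that a genuine side-exchanging translation survive in $T_m\setminus T_{2m}$, not from surjectivity of $H_1(\Sigma_{g_0})\to(\mathbb Z/n)^3$ for a connected base surface (for the unknotted families the preimage is the connected triply periodic surface and connectivity of covers is automatic for all $n$). Until the knotted nets are exhibited and a workable knottedness criterion is in place, parts of (1) and the precise divisibility hypotheses in (1) and (3) remain unproved.
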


The proof of Theorem \ref{upper bound},  using the Riemann-Hurwitz Formula and the Equivariant Loop Theorem, is given in \S2. 
The proof of Theorem \ref{sharp upper bound} occupies the remaining three sections of the paper,  which is outlined as below:

In \S3 and \S4 we 
will construct various examples to realize those upper bounds in Theorem \ref{sharp upper bound} (1),
\S3 for the unknotted case and \S4 for the knotted case, therefore Theorem \ref{sharp upper bound} (1) follows, up to the verification of the knottiness of those embeddings.
Those examples are explicit in the following sense: The 3-torus is obtained by standard opposite face identification of the cube and we define those surfaces in the cube before the identification.
Indeed, for each $g$ in Theorem \ref{sharp upper bound} (1), we construct all embeddings  $\Sigma_g\subset T^3$ realizing
$E^+_+(\Sigma_g)$ we can image for the moment, and we expect those are all embeddings realizing
$E^+_+(\Sigma_g)$, see a conjecture below.
The constructions in \S3 and \S4 often rely on an understanding of crystallographic space groups.

The knottedness of the examples in \S3 and \S4 involved in Theorem \ref{sharp upper bound} (1), as well as Theorem \ref{sharp upper bound} (2) and (3), could be verified by arguments such as those in \S3 and \S4, and by some topological reasoning; 
but we present a more convenient and interesting way in \S5.
In \S5 we first link the verification of knottedness of the examples in \S3 and \S4 with minimal surfaces in $T^3$, or equivalently, triply periodic minimal surfaces in $R^3$, which itself is an important topic, see \cite{Me} and \cite{FH} for examples.  
Historically, many minimal surfaces in $T^3$ were constructed using the symmetry of $T^3$ and it is known that minimal surfaces in $T^3$ must be unknotted \cite{Me}. 
We identify the examples in \S3 with known triply periodic minimal surfaces (up to isotopy) \cite{Br}, therefore they are unknotted.  
On the other hand, by a simple criterion from covering space theory and our constructions, the examples in \S4
are knotted, therefore can not be realized by minimal surfaces.
We also identify the group actions in \S 3 and \S 4 with known space groups as well as their index-2 supergroups; then Theorem \ref{sharp upper bound}  (2) and (3) are proved.

\begin{corollary}
$\Sigma_g \subset T^3$ realizing the upper bound $E^+_+(g)=12(g-1)$ can be realized by minimal surfaces for $g=2n^3+1, 4n^3+1, 8n^3+1$.
\end{corollary}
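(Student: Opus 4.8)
The plan is to produce, for each of the three families, a single minimal surface realizing the bound, and then to generate all the remaining members by passing to finite covers. The three base cases are $n=1$, i.e. $g=3,5,9$, and these are precisely the genera, computed in the conventional cubic torus, of the three classical cubic triply periodic minimal surfaces: the Schwarz P surface (simple-cubic lattice, genus $3$), the Schoen gyroid (body-centered lattice, genus $5$), and the Schwarz D surface (face-centered lattice, genus $9$). All three share the common primitive-cell genus $3$, being members of a single Bonnet associate family, and the conventional cubic cell contains $1$, $2$, $4$ primitive cells respectively, which is exactly what turns the primitive genus $3$ into $3,5,9$ via $k(3-1)+1=2k+1$ for $k=1,2,4$.

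First I would match the symmetric examples of \S3 that realize $E^+_+(\Sigma_g)=12(g-1)$ against these minimal surfaces, up to isotopy and up to conjugacy of the acting group. Concretely, one takes the crystallographic space group $\Gamma$ of each minimal surface, quotients by the translation lattice $\Lambda$ of the ambient torus, and verifies that the finite group $\Gamma/\Lambda$ coincides with the group acting in the corresponding \S3 example; in particular its orientation-preserving-both part (the proper, side-preserving subgroup) has order $24,48,96$, that is, exactly $12(g-1)$. Since minimal surfaces in $T^3$ are unknotted \cite{Me}, this is compatible with the unknotted realizations in Theorem \ref{sharp upper bound}(1), and the required identification is the one carried out in \S5 following \cite{Br}.

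Second, to pass from $n=1$ to general $n$, let $M\subset T^3=\mathbb{R}^3/\Lambda$ be one of the base minimal surfaces and consider the sublattice $n\Lambda\subset\Lambda$ of index $n^3$, with associated covering $\hat T^3=\mathbb{R}^3/n\Lambda\to T^3$. The preimage $\hat M$ is again minimal, because minimality is a local condition and the covering is a local isometry, and it is connected because a triply periodic surface surjects onto $\pi_1(T^3)=\Lambda$, hence onto $\Lambda/n\Lambda$. By multiplicativity of the Euler characteristic, $\chi(\hat M)=n^3\chi(M)$, so $\hat M$ has genus $n^3(g_0-1)+1$, yielding $2n^3+1,\,4n^3+1,\,8n^3+1$ from $g_0=3,5,9$. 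Every point-group element of $\Gamma$ preserves $n\Lambda$ and every translation of $\Lambda$ descends to $\hat T^3$, so $\Gamma$ acts on $(\hat T^3,\hat M)$ with symmetry group $\Gamma/n\Lambda$, whose order is $n^3$ times that of $\Gamma/\Lambda$; its orientation-preserving-both part therefore has order $n^3\cdot 12(g_0-1)=12(\mathrm{genus}(\hat M)-1)$, which is maximal by Theorem \ref{upper bound}(1). Hence each $\hat M$ is a minimal surface realizing the bound.

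The only substantive step is the identification in the three base cases: checking that the combinatorial cube-with-identifications surfaces built in \S3 are isotopic, with matching symmetry group, to the analytically defined P, gyroid and D surfaces. Once the two space groups are seen to agree as subgroups of $\mathrm{Isom}(T^3)$, the covering construction above is routine and the corollary follows.
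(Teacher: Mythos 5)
Your argument is correct, and it rests on the same two pillars as the paper's (implicit) proof: the identification, recorded in \S5 as result (a), of the equivariant surfaces $\partial N(\Theta)$ built in \S3 with the classical triply periodic minimal surfaces $S_P$, $S_D$, $S_G$, and the passage to quotients of $E^3$ by finite-index translation sublattices normal in the relevant space group. The difference is in the decomposition. The paper extracts all three genus families $2n^3+1$, $4n^3+1$, $8n^3+1$ from each single net by quotienting by the three sublattice types $T_{n^3}$, $T_{2n^3}$, $T_{4n^3}$ (so, for instance, $S_P$ alone already realizes every genus in the corollary), whereas you assign one family to each surface --- $S_P$, $S_G$, $S_D$ with conventional cubic-cell genera $3,5,9$ --- and generate each family using only the cubic sublattices $n\Lambda$ of index $n^3$. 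Your packaging buys a cleaner, self-contained covering argument: minimality of $\hat M$ is automatic since the covering is a local isometry, connectedness follows from $\pi_1$-surjectivity of the unknotted base surface (the observation opening the proof of Lemma \ref{Lem of knotted}), and the order of the symmetry group visibly scales by $[\Lambda:n\Lambda]=n^3$, exactly matching $12(\hat g-1)$; the cost is that you need all three base identifications rather than just one. Both treatments leave the matching of the combinatorial surfaces with the analytic minimal surfaces as a ``known and checkable'' fact, so you have introduced no gap that the paper does not also carry.
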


For each $g$ it is easy to construct an extendable action of order $4(g-1)$ on $(T^3, \Sigma_g)$, see Example 2.3. We end the introduction by the following

\begin{conjecture}  Suppose the genus $g>1$.

(1)  All $g$ realizing the upper bound  $E^+_+(\Sigma_g)=12(g-1)$ are listed in Theorem \ref{sharp upper bound} (1), and moreover, 
 all examples realizing those $g$ are listed in $\S3$ and $\S4$.

(2) $E^+_+(\Sigma_g)=4(g-1)$  for $g\in Z_+\setminus K$, where 
$K=\{f_i(n)| n\in Z_+\}$ and   $f_i$ runs over finitely many quadratic and cubic functions.
%A possible way to verify this conjecture is to apply the 3-dimensional Euclidean orbifold theory, as done for the spherical case in \cite{WWZZ2}. 
\end{conjecture}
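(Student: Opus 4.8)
The plan is to reduce both parts of the conjecture to a classification problem for orientable $2$-orbifolds together with the flat $3$-orbifolds into which they embed. The starting point is the Riemann--Hurwitz dictionary: if an orientation-preserving $G$-action on $\Sigma_g$ extends over $T^3$ preserving both orientations, then $\bar\Sigma=\Sigma_g/G$ is a closed orientable $2$-orbifold with cone points only, and
\[
\frac{|G|}{g-1}=\frac{-2}{\chi^{\mathrm{orb}}(\bar\Sigma)} .
\]
Moreover, after an isotopy the $G$-action on $T^3=\mathbb{R}^3/L$ is by affine isometries, so $G$ fits into a short exact sequence $1\to \Lambda/L\to G\to P\to 1$ with $\Lambda\supseteq L$ a lattice and $P\subset SO(3)$ a crystallographic point group; hence $|P|\in\{1,2,3,4,6,8,12,24\}$ and $|\Lambda/L|=[\Lambda:L]$. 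The quotient $\bar\Sigma$ is an embedded suborbifold of the flat $3$-orbifold $T^3/G$. Both parts of the conjecture then become: for which pairs (abstract orbifold $\bar\Sigma$, space group $G$) does a suitable equivariant embedding exist, and is the list of such embeddings exhausted by the constructions of \S3 and \S4.

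For part (1) I would first list the finitely many closed orientable $2$-orbifolds with $\chi^{\mathrm{orb}}=-1/6$ and crystallographic cone orders: the triangle types $S^2(2,6,6)$, $S^2(3,3,6)$, $S^2(3,4,4)$ and the quadrilateral type $S^2(2,2,2,3)$ (the solution $S^2(2,4,12)$ being excluded by the crystallographic restriction). Each such $\bar\Sigma$ forces $-\chi^{\mathrm{orb}}=1/6$ and hence $|G|=12(g-1)$. The next step is to determine, for each candidate $\bar\Sigma$, which space groups $G$ admit $\bar\Sigma$ as an equivariant quotient of an embedded $\Sigma_g\subset T^3$; since $|G|=|P|\cdot[\Lambda:L]$ and the surface orbifold fixes $|P|$ up to finitely many choices, the occurring genera are exactly $g-1=|G|/12$ as $[\Lambda:L]$ ranges over the admissible sublattice indices, producing the cubic families $g-1=2n^3,4n^3,8n^3$ (octahedral-type point group and full-rank sublattices of index growing like $n^3$) and, in the knotted case, the quadratic families $g-1=n^2,3n^2$ (sublattice index growing like $n^2$). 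Matching each realized $(\bar\Sigma,G)$ against the explicit surfaces of \S3 and \S4, and invoking the Equivariant Loop Theorem in the unknotted case together with the covering-space/minimal-surface criterion of \S5 to separate knotted from unknotted embeddings, would establish completeness.

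For part (2) the lower bound $E^+_+(\Sigma_g)\ge 4(g-1)$ holds for every $g$ by Example 2.3, so it suffices to prove the matching upper bound off an exceptional set. Here the ratio $|G|/(g-1)>4$ is equivalent to $-\chi^{\mathrm{orb}}(\bar\Sigma)<\tfrac12$, which again restricts $\bar\Sigma$ to a finite list of orbifold types with crystallographic cone orders. For each such type and each compatible point group $P$, the relation $g-1=\tfrac12\,|P|\,[\Lambda:L]\,(-\chi^{\mathrm{orb}})$ exhibits $g-1$ as a fixed quadratic or cubic polynomial in a scaling parameter $n$ attached to the sublattice index; these finitely many polynomials are the functions $f_i$, and their union of values is $K$. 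Consequently, if $g\notin K$ then no action of order exceeding $4(g-1)$ can exist, giving $E^+_+(\Sigma_g)=4(g-1)$.

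The main obstacle lies not in the arithmetic bookkeeping above but in the equivariant topology: one must show that every orientation-preserving maximal-symmetry action is realized by an embedding isotopic to one of the standard surfaces of \S3--\S4, i.e.\ that there are no exotic (in particular no further knotted) equivariant embeddings of $\Sigma_g$ inducing a given pair $(\bar\Sigma,G)$. This demands an equivariant normal-form theorem for the suborbifold $\bar\Sigma\subset T^3/G$---an equivariant incompressibility and uniqueness statement assembled from the Equivariant Loop Theorem together with equivariant sphere and annulus theorems in flat $3$-orbifolds---and it is precisely this rigidity that is not yet available in full generality, which is why the statement remains a conjecture rather than a theorem. A secondary difficulty is confirming that each candidate $2$-orbifold does embed equivariantly for exactly the claimed sublattice indices and for no others; handling the borderline orbifold types cleanly is what would pin down the exact exceptional set $K$ in part (2).
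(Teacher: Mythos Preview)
The statement you are addressing is explicitly labelled a \emph{Conjecture} in the paper; there is no proof in the paper to compare against. You recognize this yourself in your final paragraph, and the overall shape of your outline---reduce to a finite list of quotient $2$-orbifolds, then classify the compatible space groups and sublattice indices, and finally prove an equivariant uniqueness statement for the embedded suborbifold---is a reasonable program and is consistent with how the paper organizes its constructions in \S3--\S5.

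There is, however, one concrete error in your reduction for part~(1). You list the triangle orbifolds $S^2(2,6,6)$, $S^2(3,3,6)$, $S^2(3,4,4)$ alongside $S^2(2,2,2,3)$ as candidates with $\chi^{\mathrm{orb}}=-\tfrac16$. But the paper's own proof of Theorem~\ref{upper bound}(1) already excludes all three-cone-point quotients: the Equivariant Loop Theorem produces a compression disk whose image $p(\partial D)$ is a simple closed curve on $\Sigma_g/G$, and on a sphere with at most three marked points every simple closed curve bounds a disk containing at most one branch point, forcing $\partial D$ to bound in $\Sigma_g$---a contradiction. Hence the equality $|G|=12(g-1)$ already pins down the quotient orbifold uniquely as $S^2(2,2,2,3)$; the triangle types never arise and need not be analyzed. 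The same mechanism prunes your list in part~(2): any quotient with $g'=0$ and $k\le 3$ is impossible for an extendable action, so the relevant orbifolds with $-\chi^{\mathrm{orb}}<\tfrac12$ are only those with $k\ge 4$ cone points (or $g'\ge 1$, which gives much smaller ratios).

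Beyond this correction, your identification of the genuine obstacle is accurate: what is missing is an equivariant normal-form theorem asserting that every pair (quotient orbifold, space group) arising from a maximal-symmetry embedding is realized \emph{only} by the surfaces of \S3--\S4 up to equivariant isotopy. The paper offers no such statement, which is precisely why the assertion is posed as a conjecture.
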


\section{Upper and lower bounds  of extendable finite groups}\label{Sec of Maximum order}

We need the following two important results, see \cite{Hu}, \cite{MY}, to prove Theorem \ref{upper bound}.

\begin{RHF}\label{Thm of RH formula}
$\Sigma_g\rightarrow \Sigma_{g'}$ is a regular branched covering with transformation group $G$. Let $a_1, a_2, \cdots, a_k$ be the branched points in $\Sigma_{g'}$ having indices $q_1\leq q_2\leq \cdots\leq q_k$. Then
$$2-2g=|G|(2-2g'-\sum^k_{i=1}(1-\frac{1}{q_i}))$$
\end{RHF}

\begin{ELT}\label{Thm of EV loop}
Let $M$ be a three manifold with a smooth action of a discrete group $G$. Let $F$ be an equivariant subsurface of $\partial M$. If $F$ is not $\pi_1$-injective with respect to inclusion into $M$, then it admits a $G$-equivariant compression disk.
\end{ELT}

%[[VR:  I assume you mean $F$ is not $\pi_1$-injective with respect to inclusion into $M$?  ]]

Here a nonempty subset $X$ of $M$ is $G$-equivariant if $h(X)=X$ or $h(X)\cap X=\emptyset$, for any  $h\in G$. 
A disk $D\subset M$ is a compression disk of $F$ if $\partial D\subset F$ and in $F$ it does not bound any disk.

\begin{proof}[Proof of {\it(1)}] 
 
Suppose there is an extendable $G$-action on $\Sigma_g$ which preserves the orientation of both $T^3$ and $\Sigma_g$. Cutting $T^3$ along $e(\Sigma_g)$ we get a three manifold $M$. Since both $\Sigma_g$ and $T^3$ are orientable, $\Sigma_g$ must be two-sided in $T^3$, therefore $M$ contains two boundaries $F_1$ and $F_2$, and $F_1\cong F_2\cong \Sigma_g$.

Now clearly $G$ acts on $M$. Since  the $G$-action preserves the orientation of both $T^3$ and $\Sigma_g$, each of  $F_1$ and $F_2$ is a $G$-equivariant surface. Since $g>1$, $\pi_1(\Sigma_g)$ is not abelian, but $\pi_1(T^3)$ is abelian so the induced homomorphism $\pi_1(\Sigma_g)\rightarrow\pi_1(T^3)$ is not injective. 
Then at least one of $F_1$ and $F_2$ is not $\pi_1$-injective with respect to inclusion into $M$. 
%[[VR:  Is this with respect to the inclusion of $F_1$ into $M$ now? ]].  
Suppose $F_1$ is not $\pi_1$-injective, then it admits a $G$-equivariant compression disk $D$ of $F_1$ by the Equivariant Loop Theorem.

If there exists an $h\in G$ such that $h(D)=D$ and $h$ reverses an orientation of $D$, 
%[[VR:  for example rotation by $pi$ about an axis lying in $D$ ? ]]
then we can choose a $G$-equivariant regular neighbourhood $N(D)$ of $D$ such that there is a homeomorphism $i: D\times [-1, 1]\rightarrow N(D)$ and $i(D\times \{0\})=D$. Then $D'=i(D\times \{1\})$ is also a $G$-equivariant compression disk of $F_1$, and clearly for this $D'$,  if $h'(D')=D'$ for some $h'\in G$, then $h'$ preserves an orientation of $D'$. 
%[[VR:  I don't quite understand why this follows sorry.  Is it because you can choose the neighbourhood to be arbitrarily small and the group action is discrete? ]].  
Hence we can assume each element of $G$ that preserves $D$ also preserves its orientation,
in particular it is fixed point free on $\partial D$.  

Since the $G$-action on $T^3$ preserves the orientations of both $\Sigma_g$ and $T^3$, the induced $G$-action on $M$ preserves $F_1$. 
With the quotient topology $F_1/G$ is homeomorphic to some $\Sigma_{g'}$, 
and $p: F_1\rightarrow F_1/G$ is a regular branched covering. 
Since the $G$ action is fixed point free on $\partial D$ by previous discussion, $p(\partial D)$ is a simple closed curve in $F_1/G$.

Let $a_1, a_2, \cdots, a_k$ be the branch points having indices $q_1\leq q_2\leq \cdots\leq q_k$. Note $2-2g<0$. By the Riemann-Hurwitz Formula we have

$$2-2g'-\sum^k_{i=1}(1-\frac{1}{q_i})=\frac{2-2g}{|G|}<0.$$

If $g'=0$ and $k\leq 3$, then $p(\partial D)$ must bound a disk in $F_1/G$ containing at most one branch point, and $\partial D$ will bound a disk in $F_1$, a contradiction.  Hence either $g'\geq 1$ or $g'=0$ and $k\geq 4$. 

Notice that for each $q_i$, we have 

$$1-\frac{1}{q_i}\geq 1/2.$$ 

Then by elementary calculation we have    $|G|\leq 12(g-1)$, and moreover the equality holds if and only if $g'=0$, $k=4$ and $(q_1, q_2, q_3, q_4)=(2, 2, 2, 3)$.
\end{proof}

\begin{remark} Zimmermann first proved the order of orientation preserving finite group action
on handlebody of genus $g$ is bounded by $12(g-1)$ \cite{Zi} soon after the work \cite{MY}.
\end{remark}

By the above proof, we actually have the following:

\begin{theorem}\label{Thm of 12(g-1)}
Suppose $\Sigma_g$ is embedded in a three manifold $M$ and a finite group $G$  acts on $(\Sigma_g, M)$. 
If $G$ preserves both the two sides and the orientation of $\Sigma_g$ and $\Sigma_g$ is not $\pi_1$-injective in $M$, then $|G|\leq 12(g-1)$.
\end{theorem}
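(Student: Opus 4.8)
The plan is to observe that the proof of Theorem \ref{upper bound}(1) never used any property of $T^3$ beyond the three facts isolated in the hypotheses here: that $\Sigma_g$ is two-sided with $G$ preserving its two sides, that $G$ preserves the orientation of $\Sigma_g$, and that $\Sigma_g$ fails to be $\pi_1$-injective. So I would run the same argument. First cut $M$ along $\Sigma_g$ to obtain a three manifold $M'$; since $\Sigma_g$ is two-sided the cut produces two boundary copies $F_1\cong F_2\cong\Sigma_g$, and because $G$ does not interchange the two sides of $\Sigma_g$, the induced action of $G$ on $M'$ leaves each $F_i$ invariant, so both $F_1$ and $F_2$ are $G$-equivariant subsurfaces of $\partial M'$.

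The single place where the general statement needs a new input, rather than a verbatim copy, is the passage from ``$\Sigma_g$ is not $\pi_1$-injective in $M$'' to ``at least one of $F_1,F_2$ is not $\pi_1$-injective in $M'$.'' In the $T^3$ proof this was free, because $\pi_1(T^3)$ is abelian while $\pi_1(\Sigma_g)$ is not. Here I would instead invoke the Loop Theorem (Dehn's Lemma): if $\Sigma_g$ is not $\pi_1$-injective in $M$, there is an essential simple closed curve $c\subset\Sigma_g$ bounding an embedded disk $D_0\subset M$ with interior disjoint from $\Sigma_g$. Since $\operatorname{int}D_0$ is connected and misses $\Sigma_g$, it lies on a single side of $\Sigma_g$, so $D_0$ descends to a compression disk for exactly one of $F_1,F_2$ in $M'$; that $F_i$ is therefore not $\pi_1$-injective in $M'$. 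I expect this side-determination to be the only real point needing care, and it is standard.

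With one invariant boundary component, say $F_1$, not $\pi_1$-injective in $M'$, the remainder is identical to the proof of Theorem \ref{upper bound}(1). The Equivariant Loop Theorem yields a $G$-equivariant compression disk $D$ of $F_1$; if some element fixing $D$ reverses its orientation I would push $D$ off within a $G$-equivariant product neighbourhood to a parallel disk on which every stabilizing element preserves orientation, hence acts freely on the boundary curve. Then $p\co F_1\to F_1/G=\Sigma_{g'}$ is a regular branched cover whose branch data satisfy $2-2g'-\sum_{i=1}^k(1-1/q_i)=(2-2g)/|G|<0$, and the freeness on $\partial D$ makes $p(\partial D)$ a simple closed curve. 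Ruling out $g'=0$ with $k\le3$ (else $\partial D$ would bound a disk in $F_1$, contradicting that $D$ compresses) and using $1-1/q_i\ge1/2$, the Riemann-Hurwitz estimate gives $|G|\le12(g-1)$, with equality exactly when $g'=0$, $k=4$, $(q_1,q_2,q_3,q_4)=(2,2,2,3)$.
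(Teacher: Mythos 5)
Your proposal is correct and follows essentially the same route as the paper, which derives Theorem \ref{Thm of 12(g-1)} by observing that the proof of Theorem \ref{upper bound}(1) only used the hypotheses isolated here. The one step you flag as needing new input --- passing from non-$\pi_1$-injectivity of $\Sigma_g$ in $M$ to non-$\pi_1$-injectivity of one of the boundary copies $F_1,F_2$ in the cut-open manifold --- is asserted without comment in the paper, and your justification of it (an innermost compressing disk from the Loop Theorem lies on one side of $\Sigma_g$, hence descends to a compression disk for exactly one $F_i$) is a correct and welcome addition.
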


Now {\it(2)} and {\it(3)} follow from  {\it(1)}.

\begin{proof}[Proof of {\it(2)}]
Suppose there is an extendable $G$-action on $\Sigma_g$. Let $G^o$ be the normal subgroup of $G$ containing all elements that preserve both the orientations of $\Sigma_g$ and $T^3$. In the case of $E_+(\Sigma_g)$ or $E^+(\Sigma_g)$, the index of $G^o$ in $G$ is at most two, and in the case of $E(\Sigma_g)$, the index of $G^o$ in $G$ is at most four. Hence $E_+(\Sigma_g)\leq 24(g-1)$, $E^+(\Sigma_g)\leq 24(g-1)$, $E(\Sigma_g)\leq 48(g-1)$.
\end{proof}

\begin{proof}[Proof of {\it(3)}]
Suppose there is an extendable $G$-action on $\Pi_g$ and the action preserves the orientation of $T^3$. We can choose an equivariant regular neighbourhood $N(\Pi_g)$ of $\Pi_g$, then $G$ also acts on $\partial N(\Pi_g)$. Since $T^3$ is orientable, $N(\Pi_g)$ is homeomorphic to a twisted $[-1, 1]$-bundle over $\Pi_g$ and $\partial N(\Pi_g)$ 
is the orientable double cover of $\Pi_g$ under the bundle projection. Since $\chi(\Pi_g)=2-g$, $\chi(\partial N(\Pi_g))=4-2g=2-2(g-1)$.
It follows that  $\partial N(\Pi_g)$
is homeomorphic to $\Sigma_{g-1}$. Clearly the $G$-action preserves the two sides of $\partial N(\Pi_g)$. Since the action preserves the orientation of $T^3$, it also preserves the orientation of $\partial N(\Pi_g)$. Then by the result of {\it(1)}, we have $E^+(\Pi_g)\leq 12(g-2)$. And similar to the proof of {\it(2)}, we have $E(\Pi_g)\leq 24(g-2)$.
\end{proof}

\begin{example}\label{Ex of 4(g-1)} Let $([0,1]^3, +)$ be the unit cube with a cross properly embedded shown as  the right side of Figure 1.
 For each $g>1$,   put $g-1$ copies of $([0,1]^3, +)$ to get $([0,1]^3, +)_{g-1}$ which is a cuboid with an antennae properly embedded, shown as Figure 1.
 If we identify the opposite faces of the  cuboid, we get $(T^3, \Theta_g)$, where $\Theta_g$ is a graph of genus $g$.
 If we first only identify the right and left faces, we get $([0,1]^2\times S^1, A_{g-1})$,  where $A_{g-1}$ is a circular antennae with $g-1$ bars.
 It is easy to see that $G=D_{g-1}\oplus Z_2$ acts on the pair $([0,1]^2\times S^1, A_{g-1})$, where $D_{g-1}$ is the dihedral group acts in a 
 standard way on $([0,1]^2\times S^1, A_{g-1})$,  and $Z_2$ is $\pi$-rotation of $([0,1]^2\times S^1, A_{g-1})$ around the center-circle.
 Clear this action preserves the each pair of faces of $([0,1]^2\times S^1, A_{g-1})$ to be identified, therefore induces an action on $(T^3, \Theta_g)$.
 Let $N(\Theta_g)$ be a $G$-invariant regular neiborhood of $\Theta_g$.  Then $\partial N(\Theta_g)=\Sigma_g$ and $G$ of order $4(g-1)$
 acts on  $(T^3, \Sigma_g)$. 
\end{example}

\begin{figure}[h]
\centerline{\scalebox{0.5}{\includegraphics{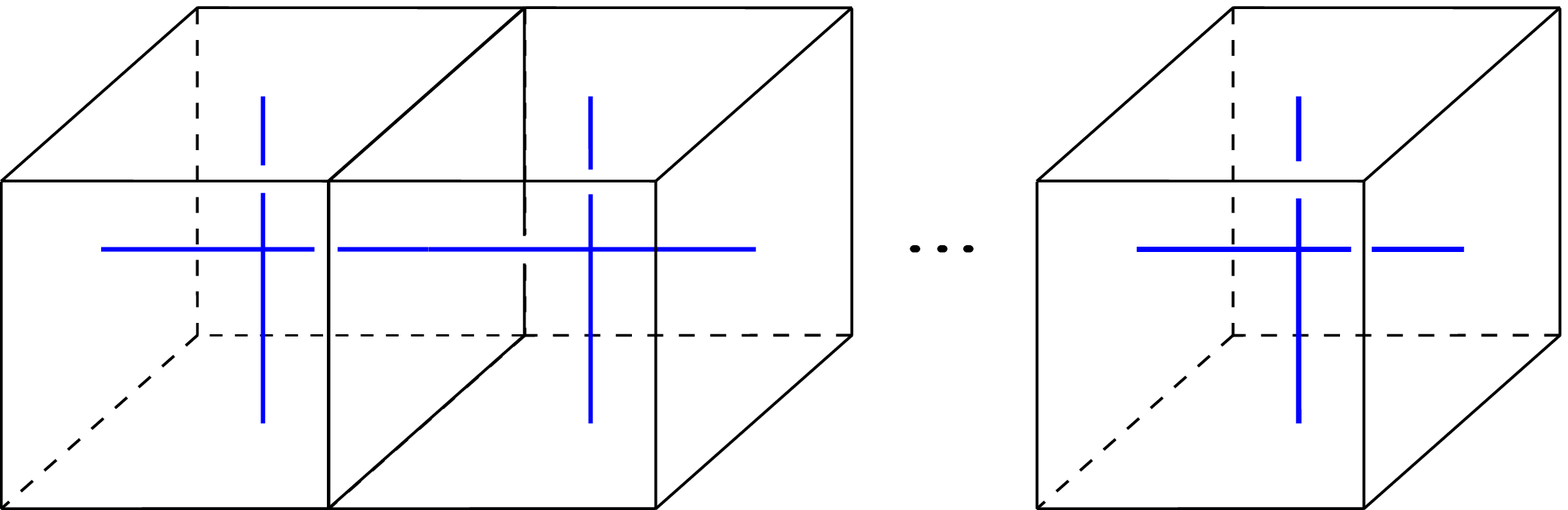}}}
\caption{}
\end{figure}

\section{Unknotted examples of the most symmetric surfaces}\label{Sec of UK example}
In this section we give three classes of examples realizing the upper bound of $E^+_+(\Sigma_g)=12(g-1)$. In each case we first construct a triply periodic graph $\Gamma$ in the three-dimensional Euclidean space $E^3$.  There will be a three-dimensional space group $\mathcal{G}$ preserving $\Gamma$.  Then we choose a rank-three translation normal subgroup $T$ in $\mathcal{G}$.  The space $E^3/T$ is our $T^3$. The finite group $G=\mathcal{G}/T$ acts on $T^3$ preserving the graph $\Theta=\Gamma/T$. Finally, we choose an equivariant regular neighbourhood $N(\Theta)$ of $\Theta$, and $\partial N(\Theta)$ is our surface $\Sigma_g$.  
After the discussion of \S5 we will see that for each example the complement of $N(\Theta)$ 
is also a handlebody, hence the surface is unknotted.

\begin{definition}\label{Def of transgroup}
Let $T_1=\{(a, b, c)\mid a, b, c\in \mathbb{Z}\}$ be the group of integer translations. Let $t_x=(1, 0, 0), t_y=(0, 1, 0), t_z=(0, 0, 1)$. An element $t=(a, b, c)\in T_1$ acts on $E^3$ as following:
$$t: (x, y, z)\mapsto(x+a, y+b, z+c)$$
For $n\in \mathbb{Z}_+$, we define three classes of subgroups $T_m$ of $T_1$ for those integers $m$ that can be presented (uniquely) in the form $n^3$, $2n^3$ and $4n^3$ as follows:
\begin{align*}
T_{n^3}&=\langle nt_x, nt_y, nt_z\rangle,\\
T_{2n^3}&=\langle nt_y+nt_z, nt_z+nt_x, nt_x+nt_y\rangle,\\
T_{4n^3}&=\langle -nt_x+nt_y+nt_z, nt_x-nt_y+nt_z, nt_x+nt_y-nt_z\rangle.
\end{align*}
Here the subscript $m$ of $T_m$ is equal to the volume $Vol(E^3/T_m)$, $m=n^3, 2n^3, 4n^3$. 
\end{definition}
Clearly $T_{2n^3}$ and  $T_{4n^3}$ are subgroups of $T_{n^3}$.
One can easily verify that $T_{(2n)^3}\subset T_{2n^3}$, since $2nt_x, 2nt_y, 2nt_z$ are linear combinations of 
$nt_y+nt_z, nt_z+nt_x, nt_x+nt_y$,
hence $T_{32n^3}=T_{4(2n)^3}\subset T_{(2n)^3}=T_{8n^3} \subset T_{2n^3}$.
Similarly one can  verify that $T_{(2n)^3} \subset T_{4n^3}$, hence 
 $T_{16n^3} = T_{2(2n)^3} \subset T_{(2n)^3} = T_{8n^3} \subset T_{4n^3}$.

%[[VR: add a note about why we need the above relationships?]] 

\begin{definition}\label{Def of iso of E3}
We define five isometries of $E^3$ as follows:
\begin{align*}
r_y: (x, y, z)&\mapsto(-x, y, -z)\\
r_z: (x, y, z)&\mapsto(-x, -y, z)\\
r_{xy}: (x, y, z)&\mapsto(y, x, -z)\\
r_{xyz}: (x, y, z)&\mapsto(z, x, y)\\
t_{1/2}: (x, y, z)&\mapsto(x+1/2, y+1/2, z+1/2)
\end{align*}
\end{definition}

%[[ VR:  Perhaps add descriptions as follows:]] \\
The isometries $r_y$, $r_z$ and $r_{xy}$ are 2-fold rotations  (i.e., rotation by an angle of $\pi$) about the $y$-axis, $z$-axis and the line $x=y$, $z=0$ respectively. 
The isometry $r_{xyz}$ is a positive $3$-fold rotation about the cube body diagonal, $x=y=z$ (i.e., right-hand rule rotation by $2\pi/3$ about the direction $[1,1,1]$.) 

The notations of space groups used below come from \cite{Ha}.

\begin{example}\label{Ex of scP}
Let $\Gamma^1_{min}$ be the one skeleton of the unit cube $[0, 1]^3$ in $E^3$ as in Figure \ref{fig:ExScPmin}. Let $\Gamma^1=\bigcup_{t\in T_1}t(\Gamma^1_{min})$. Then $\Gamma^1$, the 1-skeleton of the tessellation of $E^3$ by the unit cube, is a triply periodic graph called the simple or primitive cubic lattice (\textbf{pcu} in \cite{rcsr}).

\begin{figure}[h]
\centerline{\scalebox{0.4}{\includegraphics{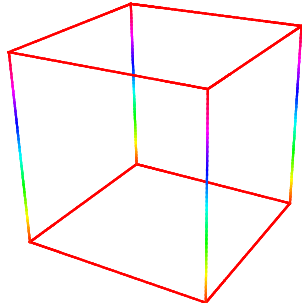}}}
\caption{One skeleton of $[0,1]^3$}\label{fig:ExScPmin}
\end{figure}

Let $H^1=\langle r_y, r_z, r_{xy}, r_{xyz}\rangle$, which is the well-known orientation-preserving isometric group of $[-1, 1]^3$ of order 24 
(with Sch\"onflies symbol $O$). 
Let $\mathcal{G}^1=\langle T_1, H^1\rangle$, this is the space group $P432$. 
Then $\mathcal{G}^1$ preserves $\Gamma^1$. 
Now in $T^3\cong E^3/T_1$ we have a graph $\Theta^1_1=\Gamma^1/T_1$. It has one vertex and three edges, hence its Euler characteristic $\chi(\Theta^1_1)=-2$ and its genus $g=1-\chi(\Theta^1_1)=3$. Clearly $G^1_1=\mathcal{G}^1/T_1\cong H^1$ acts on $T^3\cong E^3/T_1$ preserving $\Theta^1_1$, and $G^1_1$ has order $24$. Hence when we choose an equivariant regular neighbourhood $N(\Theta^1_1)$ of $\Theta^1_1$, we get an extendable action of order $24$ on $\Sigma_3\cong\partial N(\Theta^1_1)$.

Similarly in $T^3\cong E^3/T_{n^3}$ we have a graph $\Theta^1_{n^3}=\Gamma^1/T_{n^3}$. Since $T_{n^3}$ is a normal subgroup of $\mathcal{G}^1$, $G^1_{n^3}=\mathcal{G}^1/T_{n^3}$ acts on $T^3\cong E^3/T_{n^3}$ preserving $\Theta^1_{n^3}$.
\begin{align*}
\chi(\Theta^1_{n^3})&=\chi(\Theta^1_1)\cdot Vol(E^3/T_{n^3})=-2n^3\\
|G^1_{n^3}|&=|G^1_1|\cdot Vol(E^3/T_{n^3})=24n^3
\end{align*}
Hence the genus of $\Theta^1_{n^3}$ is $2n^3+1$. Then we can get an extendable action of order $24n^3$ on $\Sigma_{2n^3+1}\cong\partial N(\Theta^1_{n^3})$.

Notice that when $m=2n^3, 4n^3, n\in \mathbb{Z}_+$, $T_m$ is also a normal subgroup of $\mathcal{G}^1$. Similar to the above construction, we can get an extendable action of order $24m$ on $\Sigma_{2m+1}\cong\partial N(\Theta^1_m)$. Here $\Theta^1_m=\Gamma^1/T_m$ is a graph in $T^3\cong E^3/T_m$.
\end{example}

In the above example the superscript $1$ in $\Gamma^1$ or $\Theta^1_m$ is equal to the volume of the `minimal 3-torus' $E^3/T_1$. It is also equal to the number of vertices of the graph $\Theta^1_1$. In following examples we use similar notations.

\begin{example}\label{Ex of scD}
Let $\Gamma^2_{min}$ be the graph in the unit cube $[0, 1]^3$ in $E^3$ as in Figure \ref{fig:ExScDmin}. It consists of four edges from $(1/2, 1/2, 1/2)$ to $(0, 0, 0)$, $(0, 1, 1)$, $(1, 0, 1)$, $(1, 1, 0)$. Let $\Gamma^2=\bigcup_{t\in T_2}t(\Gamma^2_{min})$. One can check that it is connected and is known to scientists as the bonding structure of diamond (\textbf{dia} in \cite{rcsr}).  Figure \ref{fig:ExScDmin} shows a fundamental region of $T_2$.

\begin{figure}[h]
\centerline{\scalebox{0.6}{\includegraphics{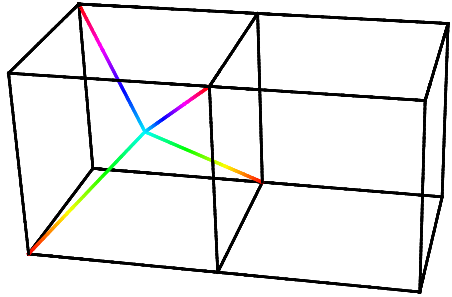}}}
\caption{$\Gamma^2_{min}$ in $[0,2]\times[0,1]\times[0,1]$}\label{fig:ExScDmin}
\end{figure}

Let $H^2=\langle r_y, r_z, r_{xyz}\rangle$. It is the orientation-preserving isometric group of the regular tetrahedron formed by the convex hull of $(1, 1, 1)$, $(1, -1, -1)$, $(-1, 1, -1)$, and $(-1, -1, 1)$ and has Sch\"onflies symbol $T$.  Let $\mathcal{G}^2=\langle T_2, H^2, t_{1/2}r_{xy}\rangle$, this is the space group $F4_1 32$.  Then $\mathcal{G}^2$ preserves $\Gamma^2$.

Now in $T^3\cong E^3/T_2$ we have a graph $\Theta^2_2=\Gamma^2/T_2$. It has two vertices and four edges, hence its Euler characteristic $\chi(\Theta^2_2)=-2$ and its genus is $3$. $T_2$ is a normal subgroup of $\mathcal{G}^2$ and $G^2_2=\mathcal{G}^2/T_2$ has order $24$. It acts on $T^3\cong E^3/T_2$ preserving $\Theta^2_2$. Hence when we choose an equivariant regular neighbourhood $N(\Theta^2_2)$ of $\Theta^2_2$, we get an extendable action of order $24$ on $\Sigma_3\cong\partial N(\Theta^2_2)$.

Similarly when $m=n^3, 4n^3, 16n^3, n\in \mathbb{Z}_+$, in $T^3\cong E^3/T_{2m}$ we have a graph $\Theta^2_{2m}=\Gamma^2/T_{2m}$. One can check that $T_{2m}$ is a normal subgroup of $\mathcal{G}^2$, and the quotient $G^2_{2m}=\mathcal{G}^2/T_{2m}$ acts on $T^3\cong E^3/T_{2m}$ preserving $\Theta^2_{2m}$.
\begin{align*}
\chi(\Theta^2_{2m})&=\chi(\Theta^2_2)\cdot Vol(E^3/T_{2m})/Vol(E^3/T_2)=-2m\\
|G^2_{2m}|&=|G^2_2|\cdot Vol(E^3/T_{2m})/Vol(E^3/T_2)=24m
\end{align*}
Hence the genus of $\Theta^2_{2m}$ is $2m+1$. Then we can get an extendable action of order $24m$ on $\Sigma_{2m+1}\cong\partial N(\Theta^2_{2m})$.
\end{example}

\begin{example}\label{Ex of shG}
Let $\gamma$ be the graph in the unit cube $[0, 1]^3$ in $E^3$ as in Figure \ref{fig:ExShGmin}. It has three edges from $(1/4, 1/4, 1/4)$ to $(0, 1/2, 1/4)$, $(1/4, 0, 1/2)$ and $(1/2, 1/4, 0)$. Let $\Gamma^4_{min}=\gamma\cup t^2_yt_zr_yr_z(\gamma)\cup t^2_xt_yt_zr_y(\gamma)\cup t^2_xt_yr_z(\gamma)$. Then let $\Gamma^4=\bigcup_{t\in T_4}t(\Gamma^4_{min})$.  One can check that it is connected, and is the oft-rediscovered chiral vertex-transitive net of degree-3 known by many names~\cite{HOP}, including $\textbf{srs}$ in \cite{rcsr}. 
Figure \ref{fig:ExShGmin} shows a fundamental region of $T_4$.

\begin{figure}[h]
\centerline{\scalebox{0.6}{\includegraphics{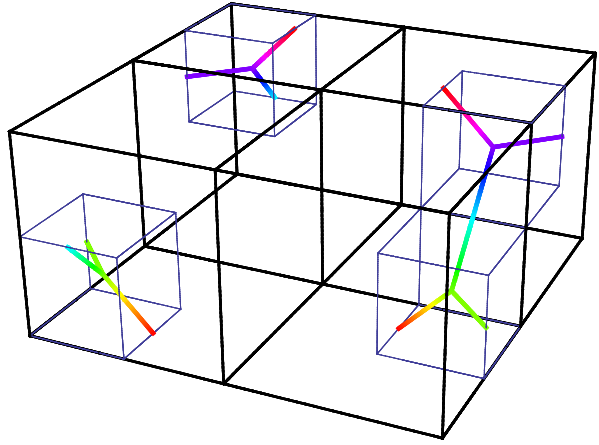}}}
\caption{$\Gamma^4_{min}$ in $[0,2]\times[0,2]\times[0,1]$}\label{fig:ExShGmin}
\end{figure}

Let $\mathcal{G}^4=\langle T_4, t_xt_zr_z, t_yt_zr_y, r_{xyz}, t_xt_{1/2}r_{xy}\rangle$, it is the space group $I4_1 32$.  Then $\mathcal{G}^4$ preserves $\Gamma^4$.

Now in $T^3\cong E^3/T_4$ we have a graph $\Theta^4_4=\Gamma^4/T_4$. It has four vertices and six edges, hence its Euler characteristic $\chi(\Theta^4_4)=-2$ and its genus is $3$. $T_4$ is a normal subgroup of $\mathcal{G}^4$ and $G^4_4=\mathcal{G}^4/T_4$ has order $24$. It acts on $T^3\cong E^3/T_4$ preserving $\Theta^4_4$. Hence when we choose an equivariant regular neighbourhood $N(\Theta^4_4)$ of $\Theta^4_4$, we get an extendable action of order $24$ on $\Sigma_3\cong\partial N(\Theta^4_4)$.

Similarly when $m=n^3, 2n^3, 4n^3, n\in \mathbb{Z}_+$, in $T^3\cong E^3/T_{4m}$ we have a graph $\Theta^4_{4m}=\Gamma^4/T_{4m}$. One can check that $T_{4m}$ is a normal subgroup of $\mathcal{G}^4$, and the quotient $G^4_{4m}=\mathcal{G}^4/T_{4m}$ acts on $T^3\cong E^3/T_{4m}$ preserving $\Theta^4_{4m}$.
\begin{align*}
\chi(\Theta^4_{4m})&=\chi(\Theta^4_4)\cdot Vol(E^3/T_{4m})/Vol(E^3/T_4)=-2m\\
|G^4_{4m}|&=|G^4_4|\cdot Vol(E^3/T_{4m})/Vol(E^3/T_4)=24m
\end{align*}
Hence the genus of $\Theta^4_{4m}$ is $2m+1$. Then we can get an extendable action of order $24m$ on $\Sigma_{2m+1}\cong\partial N(\Theta^4_{4m})$.
\end{example}

\section{Knotted examples of the most symmetric surfaces}\label{Sec of K example}

In the section we give a further six classes of examples realizing the upper bound of $E^+_+(\Sigma_g)=12(g-1)$. In each case below the embedding is knotted. The surface $e(\Sigma_g)$ does bound a handlebody in $T^3$ on one side, but the complement of the handlebody is not a handlebody, and this will be clear after the discussion of \S5.

Constructions of the extendable actions and surfaces is similar to \S4, but here the graph $\Gamma$ in $E^3$ can be disconnected. There is a space group $\mathcal{G}$ preserving $\Gamma$ and a rank three translation normal subgroup $T$ in $\mathcal{G}$ so that the graph $\Theta=\Gamma/T$ is connected. The finite group $G=\mathcal{G}/T$ acts on $T^3 \cong E^3/T$ preserving $\Theta$.  Then we choose an equivariant regular neighbourhood $N(\Theta)$ of $\Theta$, and $\partial N(\Theta)$ is our surface $\Sigma_g$.

\begin{example}\label{Ex of KS from scP}
Let $\Gamma^{1,2}=\Gamma^1\cup t_{1/2}(\Gamma^1)$. The graph $t_{1/2}(\Gamma^1)$ can be thought of as the dual of $\Gamma^1$ in $E^3$. $\Gamma^{1,2}$ has two connected components and is named \textbf{pcu-c} in \cite{rcsr}.  In the unit cube $[0, 1]^3$ it is as in Figure \ref{fig:ExScPdual}.

\begin{figure}[h]
\centerline{\scalebox{0.4}{\includegraphics{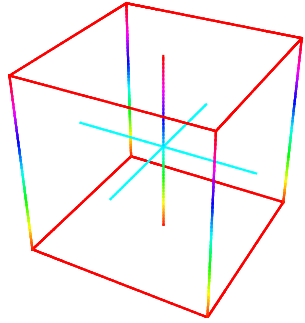}}}
\caption{Part of $\Gamma^{1,2}$ in $[0,1]^3$}\label{fig:ExScPdual}
\end{figure}

Let $\mathcal{G}^{1,2}=\langle \mathcal{G}^1, t_{1/2}\rangle$, it is the space group $I432$, and let 
$T_{1/2}=\langle T_1, t_{1/2}\rangle=\{(a/2, b/2, c/2)\mid (a, b, c)\in T_4\}$. 
Then $\mathcal{G}^{1,2}$ preserves $\Gamma^{1,2}$ and $T_{1/2}$ is a normal subgroup of $\mathcal{G}^{1,2}$. 
Since $t_{1/2}$ changes the two components of $\Gamma^{1,2}$, the graph $\Theta^{1,2}_{1/2}=\Gamma^{1,2}/T_{1/2}$ in $T^3\cong E^3/T_{1/2}$ is connected. Its Euler characteristic $\chi(\Theta^{1,2}_{1/2})=-2$ and its genus is $3$. $G^{1,2}_{1/2}=\mathcal{G}^{1,2}/T_{1/2}$ has order $24$. It acts on $T^3\cong E^3/T_{1/2}$ preserving $\Theta^{1,2}_{1/2}$. Hence when we choose an equivariant regular neighbourhood $N(\Theta^{1,2}_{1/2})$ of $\Theta^{1,2}_{1/2}$, we get an extendable action of order $24$ on $\Sigma_3\cong\partial N(\Theta^{1,2}_{1/2})$.

Similarly let $T_{n^3/2}=\{(a/2, b/2, c/2)\mid (a, b, c)\in T_{4n^3}\}, n\in \mathbb{Z}_+, 2\nmid n$. Then the graph $\Theta^{1,2}_{n^3/2}=\Gamma^{1,2}/T_{n^3/2}$ in $T^3\cong E^3/T_{n^3/2}$ is connected. Since $T_{n^3/2}$ is a normal subgroup of $\mathcal{G}^{1,2}$, $G^{1,2}_{n^3/2}=\mathcal{G}^{1,2}/T_{n^3/2}$ acts on $T^3\cong E^3/T_{n^3/2}$ preserving $\Theta^{1,2}_{n^3/2}$.
\begin{align*}
\chi(\Theta^{1,2}_{n^3/2})&=\chi(\Theta^{1,2}_{1/2})\cdot Vol(E^3/T_{n^3/2})/Vol(E^3/T_{1/2})=-2n^3\\
|G^{1,2}_{n^3/2}|&=|G^{1,2}_{1/2}|\cdot Vol(E^3/T_{n^3/2})/Vol(E^3/T_{1/2})=24n^3
\end{align*}
Hence the genus of $\Theta^{1,2}_{n^3/2}$ is $2n^3+1$. Then we can get an extendable action of order $24n^3$ on $\Sigma_{2n^3+1}\cong\partial N(\Theta^{1,2}_{n^3/2})$, here $n\in \mathbb{Z}_+, 2\nmid n$.
\end{example}

\begin{example}\label{Ex of KS from scD}
Let $\Gamma^{2,2}=\Gamma^2\cup t_x(\Gamma^2)$. The graph $t_x(\Gamma^2)$ can be thought as the dual of $\Gamma^2$ in $E^3$. $\Gamma^{2,2}$ has two connected components and is called \textbf{dia-c} in \cite{rcsr}. 
In the fundamental region of $T_2$ it is as in Figure \ref{fig:ExScDdual}.

\begin{figure}[h]
\centerline{\scalebox{0.6}{\includegraphics{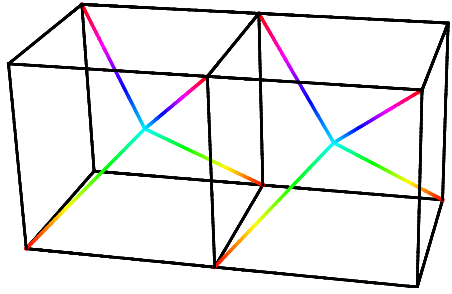}}}
\caption{Part of $\Gamma^{2,2}$ in $[0,2]\times[0,1]\times[0,1]$}\label{fig:ExScDdual}
\end{figure}

Let $\mathcal{G}^{2,2}=\langle \mathcal{G}^2, t_x\rangle$, it is the space group $P4_232$.  Then $\mathcal{G}^{2,2}$ preserves $\Gamma^{2,2}$ and contains $T_1$ as a normal subgroup. Since $t_x$ changes the two components of $\Gamma^{2,2}$, the graph $\Theta^{2,2}_1=\Gamma^{2,2}/T_1$ in $T^3\cong E^3/T_1$ is connected.  Its Euler characteristic $\chi(\Theta^{2,2}_1)=-2$, and its genus is $3$.   $G^{2,2}_1=\mathcal{G}^{2,2}/T_1$ has order $24$. It acts on $T^3\cong E^3/T_1$ preserving $\Theta^{2,2}_1$.  Hence when we choose an equivariant regular neighbourhood $N(\Theta^{2,2}_1)$ of $\Theta^{2,2}_1$, we get an extendable action of order $24$ on $\Sigma_3\cong\partial N(\Theta^{2,2}_1)$.

Similarly if $m=n^3, 4n^3, n\in \mathbb{Z}_+, 2\nmid n$, the graph $\Theta^{2,2}_m=\Gamma^{2,2}/T_m$ in $T^3\cong E^3/T_m$ is connected. One can check that $T_m$ is a normal subgroup of $\mathcal{G}^{2,2}$, $G^{2,2}_m=\mathcal{G}^{2,2}/T_m$ acts on $T^3\cong E^3/T_m$ preserving $\Theta^{2,2}_m$.
\begin{align*}
\chi(\Theta^{2,2}_m)&=\chi(\Theta^{2,2}_1)\cdot Vol(E^3/T_m)=-2m\\
|G^{2,2}_m|&=|G^{2,2}_1|\cdot Vol(E^3/T_m)=24m
\end{align*}
Hence the genus of $\Theta^{2,2}_m$ is $2m+1$. Then we can get an extendable action of order $24m$ on $\Sigma_{2m+1}\cong\partial N(\Theta^{2,2}_m)$, here $m=n^3, 4n^3, n\in \mathbb{Z}_+, 2\nmid n$.
\end{example}

\begin{example}\label{Ex of KS from scD and shG}
Let $\Gamma^{4,4}=\Gamma^4\cup t_x(\Gamma^4)\cup t_y(\Gamma^4)\cup t_z(\Gamma^4)$. $\Gamma^{4,4}$ has four connected components.  In the unit cube $[0,1]^3$ in $E^3$ it is as in Figure \ref{fig:ExScDShG}. 

\begin{figure}[h]
\centerline{\scalebox{0.6}{\includegraphics{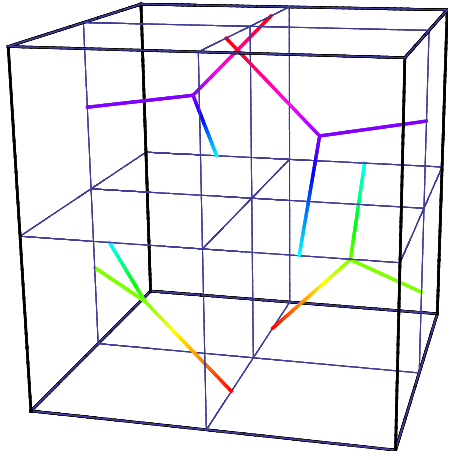}}}
\caption{Part of $\Gamma^{4,4}$ in $[0,1]^3$}\label{fig:ExScDShG}
\end{figure}

One can check that $\mathcal{G}^{2,2}$ defined in Example \ref{Ex of KS from scD} (space group $P4_232$) preserves $\Gamma^{4,4}$, and the graph $\Theta^{4,4}_1=\Gamma^{4,4}/T_1$ in $T^3\cong E^3/T_1$ is connected. Its Euler characteristic $\chi(\Theta^{4,4}_1)=-2$, and its genus is $3$. $G^{2,2}_1=\mathcal{G}^{2,2}/T_1$ has order $24$. It acts on $T^3\cong E^3/T_1$ preserving $\Theta^{4,4}_1$. Hence when we choose an equivariant regular neighbourhood $N(\Theta^{4,4}_1)$ of $\Theta^{4,4}_1$, we get an extendable action of order $24$ on $\Sigma_3\cong\partial N(\Theta^{4,4}_1)$.

Similarly if $m=n^3, 2n^3, n\in \mathbb{Z}_+, 2\nmid n$, the graph $\Theta^{4,4}_m=\Gamma^{4,4}/T_m$ in $T^3\cong E^3/T_m$ is connected. $G^{2,2}_m$ acts on $T^3\cong E^3/T_m$ preserving $\Theta^{4,4}_m$.
\begin{align*}
\chi(\Theta^{4,4}_m)&=\chi(\Theta^{4,4}_1)\cdot Vol(E^3/T_m)=-2m\\
|G^{2,2}_m|&=|G^{2,2}_1|\cdot Vol(E^3/T_m)=24m
\end{align*}
Hence the genus of $\Theta^{4,4}_m$ is $2m+1$. Then we can get an extendable action of order $24m$ on $\Sigma_{2m+1}\cong\partial N(\Theta^{2,2}_m)$, here $m=n^3, 2n^3, n\in \mathbb{Z}_+, 2\nmid n$.
\end{example}

\begin{example}\label{Ex of KS from scP and shG}
Let $\Gamma^{4,8}=\Gamma^{4,4}\cup t_{1/2}(\Gamma^{4,4})$. It has eight connected components. In the unit cube $[0, 1]^3$ it is as in Figure \ref{fig:ExScPShG}.

\begin{figure}[h]
\centerline{\scalebox{0.6}{\includegraphics{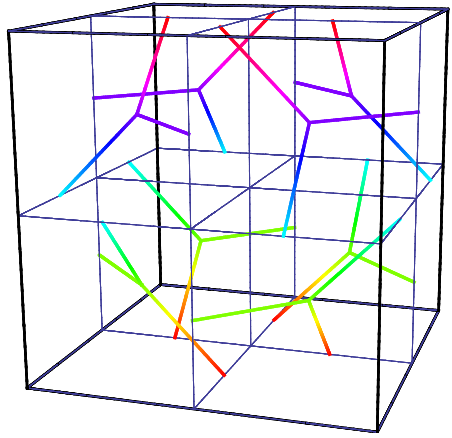}}}
\caption{Part of $\Gamma^{4,8}$ in $[0,1]^3$}\label{fig:ExScPShG}
\end{figure}

One can check that $\mathcal{G}^{1,2}$ defined in Example \ref{Ex of KS from scP} (space group $I432$)  preserves $\Gamma^{4,8}$, and the graph $\Theta^{4,8}_{1/2}=\Gamma^{4,8}/T_{1/2}$ in $T^3\cong E^3/T_{1/2}$ is connected. Its Euler characteristic is $-2$ and its genus is $3$. The order $24$ group $G^{1,2}_{1/2}$ acts on $T^3\cong E^3/T_{1/2}$ preserving $\Theta^{4,8}_{1/2}$. Hence when we choose an equivariant regular neighbourhood $N(\Theta^{4,8}_{1/2})$ of $\Theta^{4,8}_{1/2}$, we get an extendable action of order $24$ on $\Sigma_3\cong\partial N(\Theta^{4,8}_{1/2})$.

Similarly for $n\in \mathbb{Z}_+, 2\nmid n$, the graph $\Theta^{4,8}_{n^3/2}=\Gamma^{4,8}/T_{n^3/2}$ in $T^3\cong E^3/T_{n^3/2}$ is connected. $G^{1,2}_{n^3/2}$ acts on $T^3\cong E^3/T_{n^3/2}$ preserving $\Theta^{4,8}_{n^3/2}$.
\begin{align*}
\chi(\Theta^{4,8}_{n^3/2})&=\chi(\Theta^{4,8}_{1/2})\cdot Vol(E^3/T_{n^3/2})/Vol(E^3/T_{1/2})=-2n^3\\
|G^{1,2}_{n^3/2}|&=|G^{1,2}_{1/2}|\cdot Vol(E^3/T_{n^3/2})/Vol(E^3/T_{1/2})=24n^3
\end{align*}
Hence the genus of $\Theta^{4,8}_{n^3/2}$ is $2n^3+1$. Then we can get an extendable action of order $24n^3$ on $\Sigma_{2n^3+1}\cong\partial N(\Theta^{4,8}_{n^3/2})$, here $n\in \mathbb{Z}_+, 2\nmid n$.
\end{example}

\begin{example}\label{Ex of KG from shG}
Let $\gamma'$ be the graph in the unit cube $[0, 1]^3$ in $E^3$ as in Figure 3. There are three edges from $(1/4, 1/4, 1/4)$ to $(0, 1/4, 1/2)$, $(1/2, 0, 1/4)$, $(1/4, 1/2, 0)$, and three edges from $(0, 3/4, 1/2)$ to $(1/2, 3/4, 1)$, $(1/2, 0, 3/4)$ to $(1, 1/2, 3/4)$ and $(3/4, 1/2, 0)$ to $(3/4, 1, 1/2)$ separately.

Let $\Gamma'^4_{min}=\gamma'\cup t^2_yt_zr_yr_z(\gamma')\cup t^2_xt_yt_zr_y(\gamma')\cup t^2_xt_yr_z(\gamma')$, and $\Gamma'^4=\bigcup_{t\in T_4}t(\Gamma'^4_{min})$. One can check that $\Gamma'^4$ has $27$ connected components, each is similar to a mirror image of $\Gamma^4$.  Figure \ref{fig:ExShGknot} shows the part of $\Gamma'^4$ in a fundamental region of $T_4$.

%[[VR:  Technically, you could construct the mirror image of this example and then it would be 27 copies of $\Gamma^4$ rather than its mirror image.  But probably not worth the extra effort required.]]

\begin{figure}[h]
\centerline{\scalebox{0.5}{\includegraphics{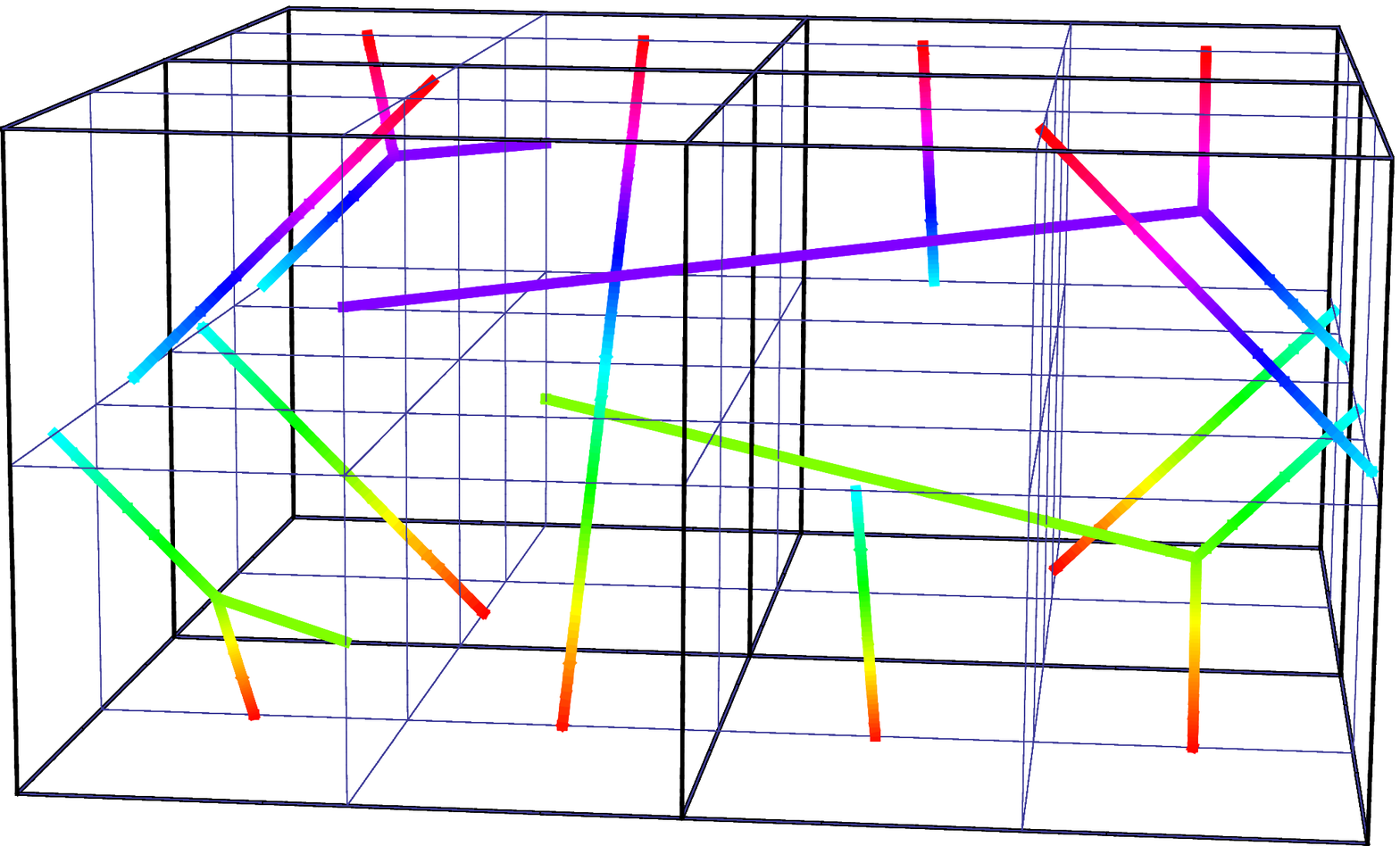}}}
\caption{Part of $\Gamma'^4{min}$ in $[0,2]\times[0,2]\times[0,1]$}\label{fig:ExShGknot}
\end{figure}

By  construction, $\mathcal{G}^4$ (space group $I4_132$) preserves $\Gamma'^4$. And one can check that $\Theta'^4_4=\Gamma'^4/T_4$ in $T^3\cong E^3/T_4$ is connected. $\Theta'^4_4$ has four vertices and six edges, hence it has Euler characteristic $-2$ and genus $3$. 
The order-$24$ group $G^4_4$ acts on $T^3\cong E^3/T_4$ preserving $\Theta'^4_4$. Hence when we choose an equivariant regular neighbourhood $N(\Theta'^4_4)$ of $\Theta'^4_4$, we get an extendable action of order $24$ on $\Sigma_3\cong\partial N(\Theta'^4_4)$.

Similarly when $m=n^3, 2n^3, 4n^3, n\in \mathbb{Z}_+, 3\nmid n$, the graph $\Theta'^4_{4m}=\Gamma'^4/T_{4m}$ in $T^3\cong E^3/T_{4m}$ is connected. The quotient $G^4_{4m}=\mathcal{G}^4/T_{4m}$ acts on $T^3\cong E^3/T_{4m}$ preserving $\Theta'^4_{4m}$.
\begin{align*}
\chi(\Theta'^4_{4m})&=\chi(\Theta'^4_4)\cdot Vol(E^3/T_{4m})/Vol(E^3/T_4)=-2m\\
|G^4_{4m}|&=|G^4_4|\cdot Vol(E^3/T_{4m})/Vol(E^3/T_4)=24m
\end{align*}
Hence the genus of $\Theta'^4_{4m}$ is $2m+1$.  Then we can get an extendable action of order $24m$ on $\Sigma_{2m+1}\cong\partial N(\Theta'^4_{4m})$, here $m=n^3, 2n^3, 4n^3, n\in \mathbb{Z}_+, 3\nmid n$.
\end{example}

\begin{definition}
Let $t_\omega=(-1/2,\sqrt{3}/2,0)$. Define
\begin{align*}
T^\omega_{n^2}&=\langle nt_\omega, nt_x, t_z\rangle,\\
T^\omega_{3n^2}&=\langle 2nt_\omega+nt_x, nt_\omega+2nt_x, t_z\rangle.
\end{align*}
Define a rotation $r_\omega$ on $E^3$ as following:
$$r_\omega: (x, y, z)\mapsto(-\frac{1}{2}x - \frac{\sqrt{3}}{2}y,  \frac{\sqrt{3}}{2}x - \frac{1}{2}y, z)$$
\end{definition}

$T^\omega_1$ is the hexagonal 3D lattice and $r_\omega$ is a rotation by $2\pi/3$ right-handed with respect to the direction $[0,0,1]$. 

\begin{example}\label{Ex of rhombus}
Let $P$ be a regular hexagon in the $xy$-plane. It has center $(0,0,0)$ and contains $(1/2,\sqrt{3}/6,0)$ as a vertex, see Figure  \ref{fig:Hex}. Let $\Gamma^P_{min}$ be the boundary of $P$ in $E^3$. Let $\Gamma^P=\bigcup_{t\in T^\omega_1}t(\Gamma^P_{min})$. It contains infinitely many connected components, and for each $n\in \mathbb{Z}$, the horizontal plane $\{(x,y,n)\mid x,y\in E^2\}$ contains exactly one of them. A local picture of $\Gamma^P$ is as in Figure \ref{fig:ExHexknot}.

\begin{figure}[h]
\centerline{\scalebox{0.6}{\includegraphics{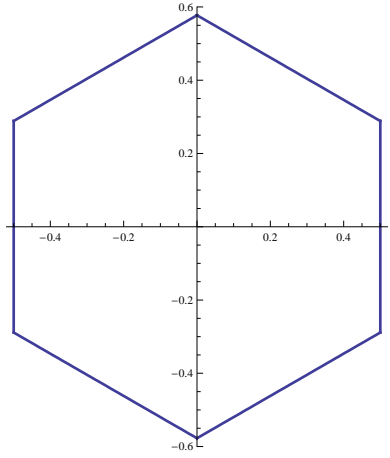}}}
\caption{A regular hexagon}\label{fig:Hex}
\end{figure}

\begin{figure}[h]
\centerline{\scalebox{0.6}{\includegraphics{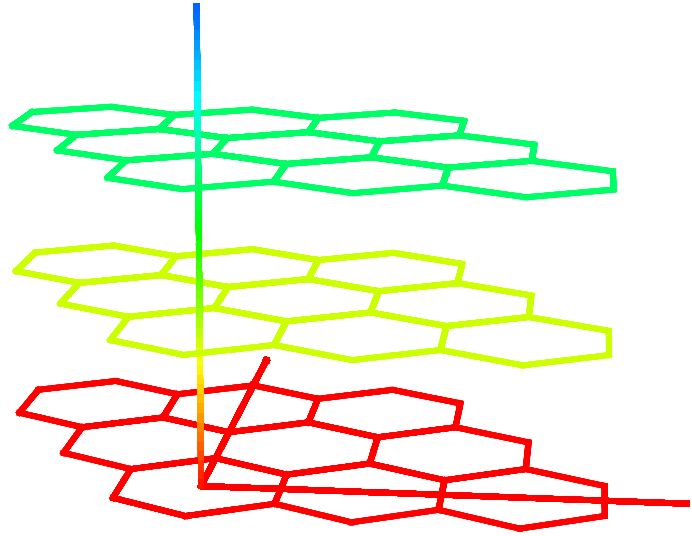}}}
\caption{A local picture of $\Gamma^P$}\label{fig:ExHexknot}
\end{figure}

Let $H^\omega=\langle r_\omega, r_x, r_y\rangle$; it is the isometric group of $P$ (Sch\"onflies symbol $D_6$). Let $\mathcal{G}^\omega=\langle T^\omega_1, H^\omega\rangle$, this is the space group $P622$.  Then $\mathcal{G}^\omega$ preserves $\Gamma^P$, and $\Theta^P_1=\Gamma^P/T^\omega_1$ is a connected graph in $T^3\cong E^3/T^\omega_1$. It has two vertices and three edges, hence its Euler characteristic $\chi(\Theta^\omega_1)=-1$ and its genus is $2$.  Clearly $G^\omega_1=\mathcal{G}^\omega/T^\omega_1$ acts on $T^3\cong E^3/T^\omega_1$ preserving $\Theta^P_1$, and $G^\omega_1\cong H^\omega$ has order $12$. Hence when we choose an equivariant regular neighbourhood $N(\Theta^P_1)$ of $\Theta^P_1$, we get an extendable action of order $12$ on $\Sigma_2\cong\partial N(\Theta^P_1)$.

Notice that when $m=n^2, 3n^2, n\in \mathbb{Z}_+$, $T^\omega_m$ is a normal subgroup of $\mathcal{G}^\omega$. Similarly the graph $\Theta^P_m=\Gamma^P/T^\omega_m$ in $T^3\cong E^3/T^\omega_m$ is connected, and $G^\omega_m=\mathcal{G}^\omega/T^\omega_m$ acts on $T^3\cong E^3/T^\omega_m$ preserving $\Theta^P_m$.
\begin{align*}
\chi(\Theta^P_m)&=\chi(\Theta^P_1)\cdot Vol(E^3/T^\omega_m)/Vol(E^3/T^\omega_1)=-m\\
|G^\omega_m|&=|G^\omega_1|\cdot Vol(E^3/T^\omega_m)/Vol(E^3/T^\omega_1)=12m
\end{align*}
Hence the genus of $\Theta^P_m$ is $m+1$. Then we can get an extendable action of order $12m$ on $\Sigma_{m+1}\cong\partial N(\Theta^P_m)$.
\end{example}

\section{Minimal surfaces, space groups, proof of the main result} \label{Sec of minimal surface}

In this section we will finish the proof  of Theorem \ref{sharp upper bound}. We need some results about triply periodic minimal surfaces, space groups, and a lemma given below.

There are three classical triply periodic minimal surfaces that admit high symmetries:  Schwarz's P surface, Schwarz's D \cite{Schw} and Schoen's gyroid surface illustrated in Figure~\ref{fig:MinSurfaces}~\cite{Br}. Denote them by $S_P$, $S_D$ and $S_G$. The fundamental 
region for $S_D$ is a cube of volume 2, we redraw $S_D$ in the fundamental region $[0,1]\times [0, 2]\times [0,1]$ we used.

\begin{figure}[h]
\begin{center}
$S_P$ \includegraphics[height=4.2cm]{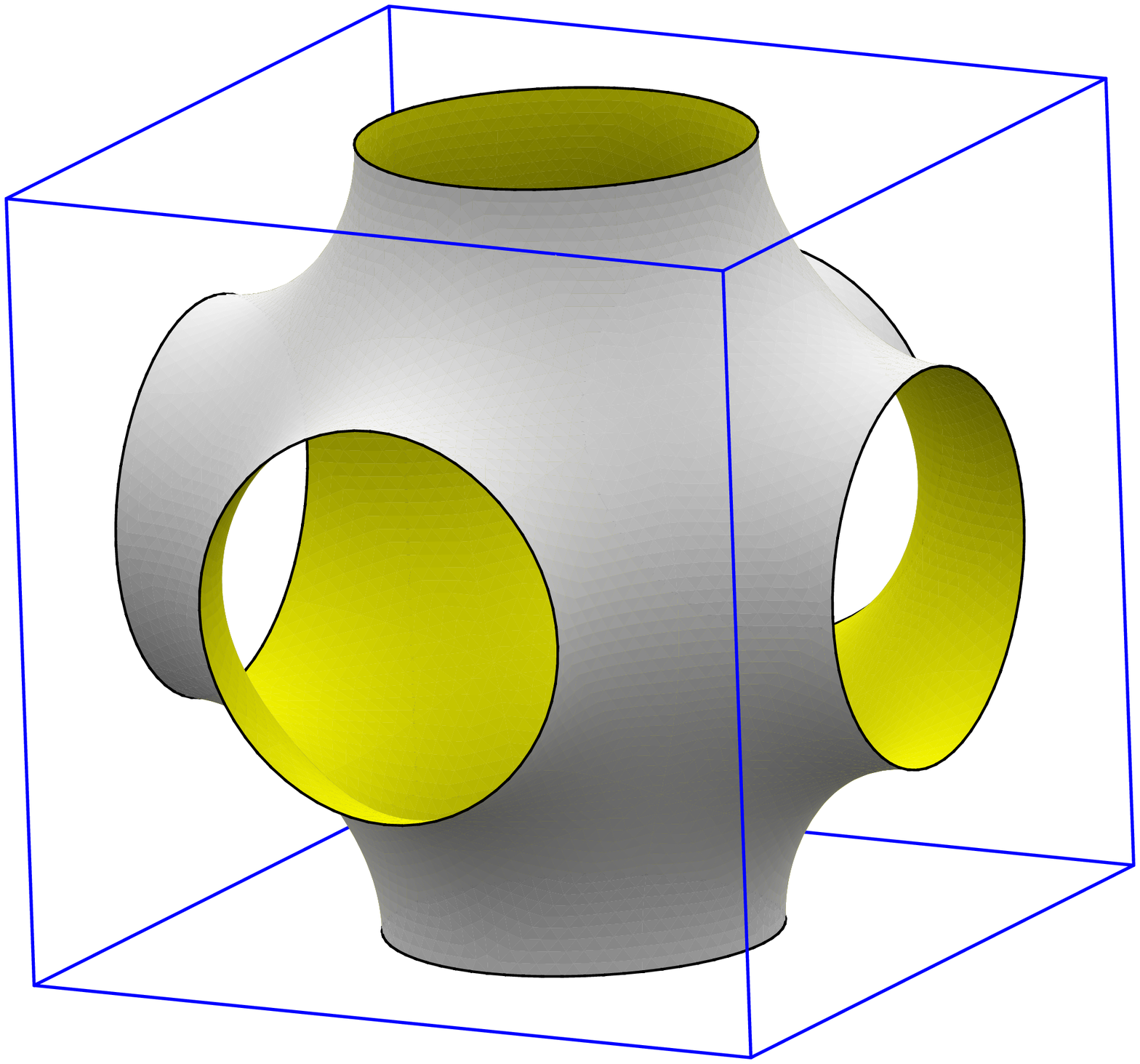}  
$S_D$ \includegraphics[height=4cm]{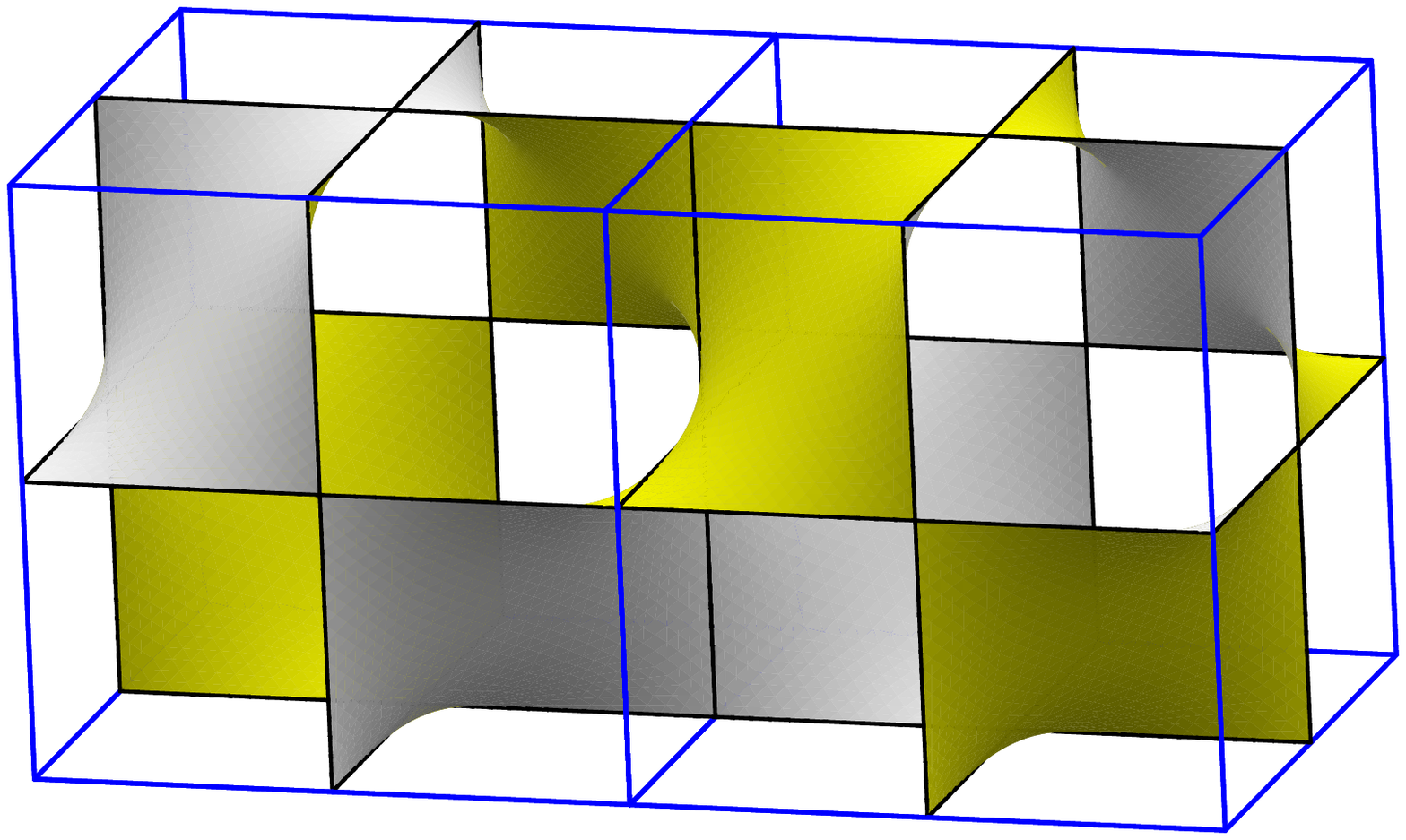}  
$S_G$ \includegraphics[height=5.3cm]{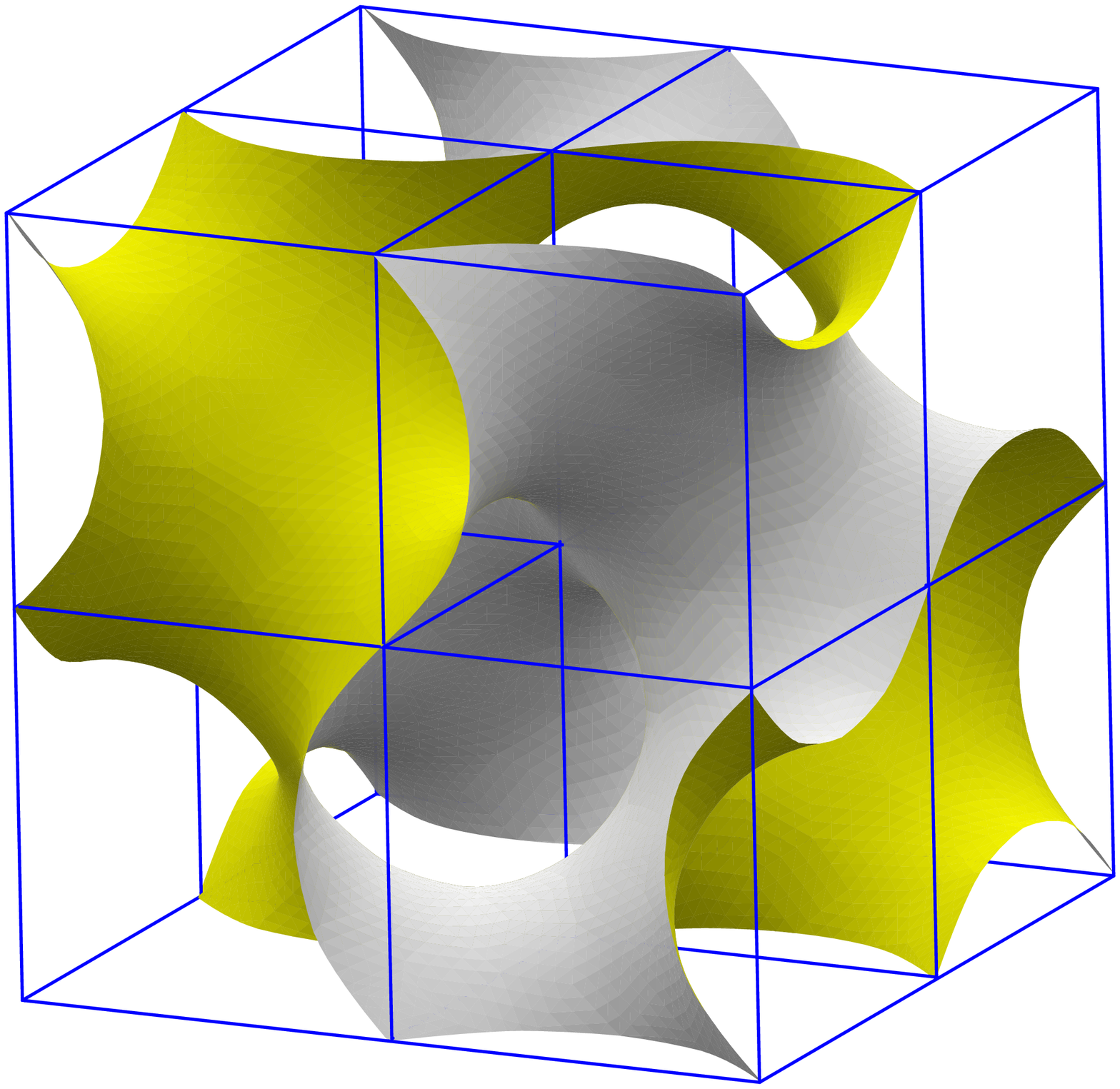}
\end{center}
\caption{Translational unit cells of the minimal surfaces $S_P$, and $S_D$ match the translational unit cells of their associated graphs, $\Gamma^1$, $\Gamma^2$, and are displayed with the same viewing direction.  The picture of $S_G$ above shows the cube $[0,2] \times [0,2] \times [0,2]$, twice the region of that used to depict $\Gamma^4$, but depicted with the same viewing direction. }\label{fig:MinSurfaces}
\end{figure}

The following results are known (and can be checked): 

(a) Boundaries of equivariant regular neighbourhoods of $\Gamma^1$, $\Gamma^2$ and $\Gamma^4$ in Example \ref{Ex of scP}, \ref{Ex of scD} and \ref{Ex of shG} can be realized by $S_P$, $S_D$ and $S_G$ respectively.

(b) $S_P$, $S_D$ and $S_G$ are unknotted in $T^3$. Actually, any genus $g>1$ minimal orientable closed surface in $T^3$ must be unknotted,\cite{Me}.

The notations of space groups below come from \cite{Ha}.

(c) $\mathcal{G}^1$, $\mathcal{G}^2$, $\mathcal{G}^4$, $\mathcal{G}^{1,2}$ and $\mathcal{G}^{2,2}$ in Example \ref{Ex of scP}, \ref{Ex of scD}, \ref{Ex of shG}, \ref{Ex of KS from scP} and \ref{Ex of KS from scD} are the space groups $[P432]$, $[F4_132]$, $[I4_132]$, $[I432]$ and $[P4_232]$ respectively.

(d) $S_P$, $S_D$, $S_G$ are preserved by space groups $[Im\bar{3}m]$, $[Pn\bar{3}m]$, $[Ia\bar{3}d]$ respectively.

(e) We have the  following index two subgroup sequences:
\begin{itemize}
\item $[P432]\subset[I432]\subset[Im\bar{3}m]$
\item $[F4_132]\subset[P4_232]\subset[Pn\bar{3}m]$
\item $[I4_132]\subset[Ia\bar{3}d]$
\end{itemize}

\begin{lemma}\label{Lem of knotted}
Let $T$ be a lattice in $E^3$, $p: E^3\rightarrow T^3 = E^3/T$ be the covering map. $\Sigma_g$ is an embedded surface in $T^3$. If $p^{-1}(\Sigma_g)$ is not connected, then $\Sigma_g$ is knotted.
\end{lemma}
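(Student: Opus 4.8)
The plan is to prove the contrapositive: if $\Sigma_g$ is unknotted, then $p^{-1}(\Sigma_g)$ is connected. Since $T$ is a rank-three lattice and $E^3$ is simply connected, $p\co E^3\to T^3$ is the universal covering with deck group $T\cong\pi_1(T^3)$. The first step is to translate the connectedness of $p^{-1}(\Sigma_g)$ into an algebraic condition. Writing $i\co\Sigma_g\hookrightarrow T^3$ for the inclusion, I would invoke the standard fact from covering space theory that the restriction $p|\co p^{-1}(\Sigma_g)\to\Sigma_g$ is itself a covering (the pullback of the universal cover along $i$), and that its components are in bijection with the cosets of the image $i_*(\pi_1(\Sigma_g))$ in $\pi_1(T^3)$. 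Concretely, the fiber over a basepoint is a copy of $\pi_1(T^3)$ on which $\pi_1(\Sigma_g)$ acts through $i_*$ by multiplication, and the orbits of this action are exactly those cosets. Hence $p^{-1}(\Sigma_g)$ is connected if and only if $i_*\co\pi_1(\Sigma_g)\to\pi_1(T^3)$ is surjective.

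It then remains to show that the unknotted hypothesis forces $i_*$ to be surjective. By Definition \ref{Def of unknotted and knotted}, $\Sigma_g$ splits $T^3$ into two handlebodies $V_1$ and $V_2$, each with boundary $\Sigma_g$. The key input is the classical fact that the inclusion of the boundary of a handlebody induces a surjection on fundamental groups: each $V_j$ deformation retracts to a wedge of $g$ circles, and under the inclusion $\Sigma_g=\partial V_j\hookrightarrow V_j$ a standard system of generators of $\pi_1(\Sigma_g)$ maps onto a free basis of $\pi_1(V_j)$, the complementary curves bounding meridian disks. Consequently the image of $\pi_1(\Sigma_g)$ and the image of $\pi_1(V_j)$ in $\pi_1(T^3)$ coincide, for $j=1,2$.

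Finally I would apply van Kampen's theorem to the decomposition $T^3=V_1\cup_{\Sigma_g}V_2$: the group $\pi_1(T^3)$ is generated by the images of $\pi_1(V_1)$ and $\pi_1(V_2)$. Since each of these images equals $i_*(\pi_1(\Sigma_g))$ by the previous paragraph, the subgroup $i_*(\pi_1(\Sigma_g))$ already generates all of $\pi_1(T^3)$, that is, $i_*$ is surjective. Combined with the first step, this shows that $p^{-1}(\Sigma_g)$ is connected whenever $\Sigma_g$ is unknotted, which is precisely the contrapositive of the lemma.

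I expect the only delicate point to be the bookkeeping in the covering-space correspondence of the first step, namely verifying that $p^{-1}(\Sigma_g)\to\Sigma_g$ is genuinely a covering and that its components match cosets of $i_*(\pi_1(\Sigma_g))$; this requires $\Sigma_g$ to be a suitably nice subspace of $T^3$ (locally flat, or a subcomplex after triangulation), which is automatic for a smoothly embedded surface. The handlebody surjectivity and the van Kampen step are then routine.
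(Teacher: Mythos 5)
Your proof is correct and follows essentially the same route as the paper's: both arguments reduce the lemma to the equivalence between connectedness of $p^{-1}(\Sigma_g)$ and surjectivity of $i_*\co\pi_1(\Sigma_g)\to\pi_1(T^3)$ via covering space theory, and to the fact that an unknotted embedding is $\pi_1$-surjective. The only difference is presentational: the paper argues directly (disconnected preimage implies a proper stabilizer subgroup, hence non-surjectivity, hence knotted) and simply asserts the surjectivity of $i_*$ for unknotted embeddings, whereas you take the contrapositive and supply the justification via the handlebody decomposition and van Kampen, which is a welcome amount of extra detail.
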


\begin{proof} Note first, from the definition of unknotted embedding $\Sigma_g \subset T^3$, the induced map on the fundamental groups must be surjective.
Let $F$ be a connected component of $p^{-1}(\Sigma_g)$.  Let $St(F)$ be its stable subgroup in $T$, i.e., the lattice translations that preserve $F$. 
If $p^{-1}(\Sigma_g)$ has more than one component then $St(F) \neq T$, hence $\pi_1(\Sigma_g) \rightarrow \pi_1(T^3)$ is not surjective.
Hence $\Sigma_g$ is knotted in $T^3$.
\end{proof}

%[[VR:  Also with respect to the above lemma:  What about the case that you have a small copy of $\Sigma_g$ sitting within a translational unit cell of the lattice?  Then $p^{-1}(\Sigma_g)$ is not connected, but I'm not sure I would consider this to be knotted.  Is the complement of this $\Sigma_g$ in $T^3$ a handlebody?  It is in $S^3$, but perhaps not in $T^3$.  And actually I see now that you describe the hexagon layers example 4.7 as knotted.  This is quite counter-intuitive, especially from a science perperspective! ]] 

\begin{proof}[Proof of Theorem \ref{sharp upper bound}]
By the above results (a) and (b), the surfaces in $T^3$ described in \S3 are unknotted. Since $\Gamma^{1,2}$, $\Gamma^{2,2}$, $\Gamma^{4,4}$, $\Gamma^{4,8}$, $\Gamma'^4$ and $\Gamma^P$ are disconnected, by Lemma \ref{Lem of knotted}, the surfaces in $T^3$ described in \S4 are knotted. Then by combining the Examples in \S3 and \S4, we finish the proof of Theorem \ref{sharp upper bound} (1).

Now we are going to prove Theorem \ref{sharp upper bound} (2) and (3) based on the examples in \S3 and  \S4, and the results (c), (d) and (e) above.

For (2). Recall $\mathcal{G}^1=[P432]$. Example 3.3 told us that when $m=n^3, 2n^3, 4n^3$ we have the action $\mathcal{G}^1/T_m$  on $\Gamma^1/T_m$, or equivalently on $S_P/T_m$ by (a), of order $24m$. By (d) $[Im\bar{3}m]$ preserves $S_P$. 
By  (e),  we have an  order $96m$ extendable $[Im\bar{3}m]/T_m$-action on $S_P/T_m\cong \Sigma_{2m+1}$ realizing the upper bound of $E(\Sigma_g)$, where $g=2m+1$. 

The group $[Im\bar{3}m]/T_m$ contains two elements $h_1$ and $h_2$ satisfying: $h_1$ preserves the orientation of $S_P/T_m$ and reverses the orientation of $E^3/T_m$, $h_2$ reverses the orientation of $S_P/T_m$ and preserves the orientation of $E^3/T_m$.  Then choosing the index two subgroups of $[Im\bar{3}m]$,  $\langle[P432],h_1\rangle$ and  $\langle[P432],h_2\rangle$,  we get an extendable action on $\Sigma_{2m+1}$, realizing the upper bound of $E_+(\Sigma_g)$ and $E^+(\Sigma_g)$ respectively.  Now (2) is proved.

For (3).  Recall $\mathcal{G}^2=[F4_132]$. Example 3.4 told us that when $m=n^3, 4n^3, 16n^3$ we have the action $\mathcal{G}^2/T_{2m}$  on $\Gamma^2/T_{2m}$, or equivalently on $S_D/T_{2m}$ by (a), of order $24m$.   

Similarly we also have an order $96m$ extendable $[Pn\bar{3}m]/T_{2m}$-action on $S_D/T_{2m}\cong \Sigma_{2m+1}$ realizing the upper bound of $E(\Sigma_g)$, where $g=2m+1$. 

Furthermore if $m=n^3, 4n^3, n\in \mathbb{Z}_+, n$ odd,  $[Pn\bar{3}m]/T_{2m}$ contains a translation $h$ reversing an orientation of $S_D/T_{2m}$ and preserving an orientation of $E^3/T_{2m}$ (see Example 4.2 for details, or better to see $S_D$ in Figure 12). Modulo this translation we can get an order $48m$ extendable $[Pn\bar{3}m]/T_m$-action on $S_D/T_m\cong \Pi_{2m+2}$, realizing the upper bound $24(g-2)$ of $E(\Pi_g)$, where
$g=2m+2$. Then choosing the index two subgroup $[P4_232]=\mathcal{G}^{2,2}$, we can get an extendable action on $\Pi_{2m+2}$, realizing the upper bound of $E^+(\Pi_g)$. Now (3) is proved.
\end{proof}

\begin{remark} In the above discussion for (3), the regular neighbourhood $N(S_D/T_m)$ of $S_D/T_m$ is homeomorphic to a twisted $[-1,1]$-bundle over $S_D/T_m$. Its complement in $E^3/T_m$ is essentially the regular neighbourhood $N(\Theta^{2,2}_m)$ of $\Theta^{2,2}_m$ in Example \ref{Ex of KS from scD}. Similarly for $n\in \mathbb{Z}_+, n$ odd, and $m = n^3 / 2$, we can get an order $48m$ extendable $[Im\bar{3}m]/T_{n^3/2}$-action on $S_P/T_{n^3/2}\cong \Pi_{2n^3+2}$, realizing the upper bound of $E(\Pi_g)$. Choosing the index two subgroup $[I432]$, we can get an extendable action on $\Pi_{2n^3+2}$, realizing the upper bound of $E^+(\Pi_g)$. The regular neighbourhood $N(S_P/T_{n^3/2})$ is homeomorphic to a twisted $[-1,1]$-bundle over $S_P/T_{n^3/2}$. Its complement in $E^3/T_{n^3/2}$ is essentially the regular neighbourhood $N(\Theta^{1,2}_{n^3/2})$ in Example \ref{Ex of KS from scP}.
There is no similar construction for a regular neighbourhood of the Gyroid minimal surface because there is no translation that reverses the orientation of $S_G/T_{4}$; the handlebodies on each side of $S_G$ are mirror images of one another.  
\end{remark}
\bibliographystyle{amsalpha}

\end{document}